\documentclass[11pt]{article}
\usepackage{latexsym,bm}
\usepackage{mathrsfs}
\usepackage{amsmath,amssymb,amsthm,mathtools}
\usepackage{enumitem}
\usepackage{amsmath,amsthm}
\usepackage{graphicx}
\usepackage{amssymb}
\usepackage{CJK}
\usepackage{color}
\usepackage{cite}
\usepackage{comment}
\usepackage{array}
\usepackage{stfloats}
\usepackage[colorlinks,
            linkcolor=blue,       
            anchorcolor=blue,  
            citecolor=blue,        
            ]{hyperref}
\usepackage{caption}
\usepackage{graphicx}
\usepackage{epstopdf}
\usepackage{upgreek}
\usepackage{subcaption}

\newtheorem{theorem}{Theorem}[section]
\newtheorem{lemma}[theorem]{Lemma}

\newtheorem{corollary}[theorem]{Corollary}
\newtheorem{problem}[theorem]{Problem}

\theoremstyle{remark}
\newtheorem{remark}[theorem]{Remark}
\theoremstyle{definition}
\newtheorem{definition}[theorem]{Definition}

\DeclareGraphicsExtensions{.eps,.eps.gz}
\usepackage[top=2.5cm,bottom=2.5cm,left=2.8cm,right=2.2cm]{geometry}
\normalsize \rm
\allowdisplaybreaks[4]
\makeatletter
\@addtoreset{equation}{section}
\makeatother

 \usepackage{indentfirst}
\begin{document}
\begin{CJK}{GBK}{song}
\newcommand{\song}{\CJKfamily{song}}    
\newcommand{\fs}{\CJKfamily{fs}}        
\newcommand{\kai}{\CJKfamily{kai}}      
\newcommand{\hei}{\CJKfamily{hei}}      
\newcommand{\li}{\CJKfamily{li}}        
\renewcommand\figurename{Fig.}

\begin{center}
{{\huge Cop numbers for subclasses of partial cubes}} \\[18pt]
{\Large Zhaoman Huang$^{1}$, Yan-Ting Xie$^{1}$, \footnotetext{*Corresponding author.\\ E-mail address: shjxu@lzu.edu.cn (S.-J. Xu).} Shou-Jun Xu$^{1,*}$ }\\[6pt]
{ \footnotesize  $^{1}$ School of Mathematics and Statistics, Gansu Center for Applied Mathematics, Lanzhou University, Lanzhou, Gansu 730000, China}
\end{center}
\vspace{1mm}
\begin{abstract}
The game of Cops and Robbers is a classical pursuit--evasion game on graphs. For a graph $G$, the cop number $c(G)$ is the minimum number of cops needed to guarantee the capture of a robber on $G$. Although this parameter has been determined for several fundamental graph classes, comparatively few exact results are known for partial cubes and their subclasses. We first establish an upper bound for every finite median graph \(M\) in terms of its tree-dimension, which improves Crawford and Ir\v{s}i\v{c} Chenoweth's bound significantly. This result refines the previous upper bound expressed in terms of a hypercube embedding dimension and can give a substantially smaller estimate.  Then we investigate the cop numbers of simplex graphs---a subclass of partial cubes. For a finite graph \(G\), the simplex graph \(S(G)\) has the cliques of \(G\), including the empty clique, as its vertices, with two cliques adjacent whenever they differ in exactly one vertex. We establish a general lower bound for \(c(S(G))\) in terms of the clique number of \(G\) and a general upper bound in terms of its chromatic number. 
Finally, as direct applications, we determine the exact values of cop numbers of some special simplex graphs---bipartite wheels, Fibonacci and Lucas cubes. 

\noindent {\bf Keywords:} Cop number; partial cubes; median graphs; simplex graphs; Fibonacci cubes; Lucas cubes.\\
\end{abstract}

\section{Introduction}

The game of Cops and Robbers is a classical pursuit-evasion game on graphs. In its standard version, the game is played on a finite connected graph $G$ according the following rules: a set of cops first chooses initial vertices (two or more cops choosing the same initial vertex is allowed), and then the robber chooses an initial vertex. Thereafter the players move alternately, with the cops moving first. At each turn, every player may either remain at the current vertex or move to an adjacent vertex. The cops win if, at some time, one of them occupies
the same vertex as the robber. The robber wins if he can evade
capture indefinitely. The \emph{cop number} of $G$, denoted by $c(G)$, is the minimum number of cops that guarantee capture of the robber. Cops and Robbers has even found application in robotics, artificial intelligence, and so called moving target search; see \cite{IsazaLuBulitkoGreiner2008,MoldenhauerSturtevant2009}.

Since its introduction by Quilliot and by Nowakowski and Winkler, the cop number has been studied from structural, algorithmic, and extremal perspectives \cite{BonatoNowakowski2011,Bradshaw2023, Chudnovsky2024,Lehner2021,NowakowskiWinkler1983,Quilliot1983}. Fundamental results include the characterization of cop-win graphs as dismantlable graphs, Aigner and Fromme's \cite{AignerFromme1984} theorem that every planar graph has cop number at most three, Schr\"{o}der's \cite{Schroeder1983} extension of this bound to graphs of bounded genus and showing that every graph of genus $g$ has cop number at most $\lfloor 3g/2\rfloor+3$, and the formula of Maamoun and Meyniel \cite{MaamounMeyniel1987} for the cop number of Cartesian products of nontrivial trees. In particular, their result determines the cop number of every hypercube.

For general graphs, the most prominent open problem is Meyniel's conjecture. It asserts that there exists a constant $C$ such that every connected graph $G$ on $n$ vertices satisfies $c(G) \le C\sqrt{n}.$ This order of magnitude would be best possible, since incidence graphs of finite projective planes and related high-girth constructions give lower bounds of order $\Omega(\sqrt n)$. Meyniel's conjecture remains open and is widely regarded as one of the central problems in the theory of Cops and Robbers \cite{BairdBonato2012,BonatoNowakowski2011}. Nevertheless, it has been verified for several families; for instance, it has been established for random $d$-regular graphs \cite{Pralat2019}, binomial random graphs \cite{PralatWormald2015}, graphs of diameter 2 \cite{LuPeng2012}, bipartite graphs of diameter 3 \cite{LuPeng2012}, and abelian Cayley graphs \cite{Bradshaw2020}.

The present paper focuses on the cop number of partial cubes and their subclasses. Recall that a partial cube is a graph that admits an isometric embedding into a hypercube $Q_{n}$. Partial cubes form a broad and structurally rich class that includes trees, hypercubes, median graphs, Fibonacci cubes, Lucas cubes, and numerous graph families arising in interconnection networks and chemical graph theory. Despite the extensive structural theory developed for partial cubes, comparatively little is known about their cop numbers. Crawford and Ir\v{s}i\v{c} Chenoweth \cite{CrawfordIrsicChenoweth2025} recently initiated the systematic study of this problem. They established general lower bounds for partial cubes, obtained upper bounds for important subclasses such as median graphs, and stated bounds for Fibonacci and Lucas cubes. Their work motivates the problem of determining the exact cop numbers for particular, structurally meaningful subclasses of partial cubes.

Crawford and Ir\v{s}i\v{c} Chenoweth \cite{CrawfordIrsicChenoweth2025} showed that if a finite median graph \(M\) (i.e., a partial cube in which every triple of vertices has a unique median) admits an isometric embedding into \(Q_d\), then $c(M)\leq \left\lceil(d+1)/2\right\rceil$. We refine this result by replacing the dimension of the ambient hypercube with the tree-dimension of \(M\). More precisely, we prove that for any finite median graph $M$,
\[
c(M)\leq \left\lceil (tdim(M)+1)/2 \right\rceil = \lceil (\chi(M^\#)+1)/2 \rceil,
\]
where $tdim(M)$ denotes the tree-dimension of $M$, $\chi(M^\#)$ denotes the chromatic number of the crossing graph $M^\#$ of $M$. Since \(Q_d\) is the Cartesian product of \(d\) copies of \(K_2\), every isometric embedding of \(M\) into \(Q_d\) gives a representation of \(M\) inside a product of \(d\) trees. Therefore, $tdim(M)\leq d$ and our estimate is never weaker than the previous hypercube-dimension bound.

Simplex graphs offer a framework for establishing exact cop numbers for subclasses of partial cubes. For a finite graph \(G\), the simplex graph \(S(G)\) is the graph whose vertices are the cliques of \(G\), including the empty clique, with two cliques adjacent whenever they differ in exactly one vertex. Simplex graphs were introduced by Bandelt and van de Vel \cite{BandeltVanDeVel1989}, who proved that every simplex graph is a median graph and hence a partial cube. Subsequently, Klav\v{z}ar and Mulder
\cite{KlavzarMulder2002} proved that the crossing graph of \(S(G)\) is isomorphic to \(G\), that is, $(S(G))^{\#}\cong G$. This identity makes it possible to relate the cop number of \(S(G)\) to structural parameters of \(G\).

Several familiar families of partial cubes arise as simplex graphs. Since every subset of the vertex set of the complete graph \(K_n\) is a clique, one has $S(K_n)\cong Q_n$. Cycles provide another basic example. For \(n\geq3\), let \(BW_n\) denote the bipartite wheel, also called the gear graph, obtained from the wheel with rim \(C_n\) by subdividing every rim edge exactly once \cite{Kirlangic2009}. Under the representation of the simplex graph, the empty clique of \(C_n\) corresponds to the hub, the singleton cliques correspond to the original rim vertices, and the edge cliques correspond to the subdivision vertices. Therefore, $S(C_n)\cong BW_n$ \cite{XieXu2025}. Gear graphs have also been studied in graph-embedding and network-vulnerability settings, including wirelength, rupture degree, integrity, and domination integrity \cite{GreeniJoshwa2023,Kirlangic2009,SundareswaranSwaminathan2016}.

Simplex graphs also provide representations of Fibonacci and Lucas cubes. The \(n\)-dimensional Fibonacci cube \(\Gamma_n\) is the subgraph of \(Q_n\) induced by the binary strings of length \(n\) containing no two consecutive \(1\)'s. The \(n\)-dimensional Lucas cube \(\Lambda_n\) is its cyclic analogue, obtained by imposing the additional condition that the first and last coordinates are not both equal to \(1\). Since cliques of the complement of a graph are precisely its independent sets, these definitions yield $\Gamma_n\cong S(\overline{P_n})$ and $\Lambda_n\cong S(\overline{C_n})$, where the latter representation is used for \(n\geq3\) \cite{Klavzar2013,MunariniCippoSalvi2001,Taranenko2013}.

Fibonacci and Lucas cubes are important not only as prominent subclasses of partial cubes but also because of their applications. Fibonacci cubes were introduced by Hsu \cite{Hsu1993} as interconnection topologies for parallel and distributed computing systems, and their structural, recursive, and enumerative properties have since been extensively studied. A comprehensive treatment of Fibonacci cubes, Lucas cubes, and related graph families can be found in the recent monograph of E\u{g}ecio\u{g}lu, Klav\v{z}ar, and Mollard \cite{EgeciogluKlavzarMollard2023}. These graph families also arise in chemical graph theory. In particular, Fibonacci cubes occur as resonance graphs of fibonaccenes, where vertices represent perfect matchings and adjacency corresponds to a face rotation \cite{Klavzar2005}. Lucas cubes and related constructions likewise appear in the study of resonance graphs associated with cyclic molecular systems, including cyclic polypyrenes and armchair nanotubes \cite{Zigert2013}.

These representations motivate us to study the cop number of \(S(G)\) by structural parameters of the base graph \(G\). One of our main results gives general lower and upper bounds for \(c(S(G))\) in terms of the clique number and chromatic number of \(G\). In particular, these bounds coincide when \(G\) is perfect, yielding an exact formula for the cop number of \(S(G)\).

The general bounds have several immediate applications. The representation \(Q_n\cong S(K_n)\) recovers the classical result for the cop number of hypercubes. Similarly, the same bounds determine the exact cop number of every bipartite wheel by the representation \(S(C_n)\cong\mathrm{BW}_n\) for \(n\geq3\). We also apply the lower and upper bounds for simplex graphs to the complements of paths $\overline{P_n}$ and cycles $\overline{C_n}$. From the representation $\Gamma_n\cong S(\overline{P_n})$, we determine the cop number of every Fibonacci cube exactly for every \(n\geq0\). For Lucas cubes, the representation $\Lambda_n\cong S(\overline{C_n})$ for \(n\geq3\) shows that the general bounds coincide when \(n\equiv0,1,2\pmod4\). In the remaining case \(n\equiv3\pmod4\), we develop a block-projection strategy that closes the one-cop gap between the general lower and upper bounds.

The remainder of this paper is organized as follows. We begin in Section~2 with the necessary definitions and preliminary results. Section~3 develops refined upper bounds for the cop number of median graphs and general lower and upper bounds for the cop numbers of simplex graphs. Applications to bipartite wheels and Fibonacci cubes are presented next in Section~4, followed by the treatment of Lucas cubes in Section~5. Concluding remarks are given in Section~6.

\section{Preliminaries}

Throughout this paper, all graphs are finite, simple, and undirected. Graphs on which the game of Cops and Robbers is played are always assumed to be connected. For a graph \(G\), its \emph{vertex set} and \emph{edge set} are denoted by \(V(G)\) and \(E(G)\), respectively. The \emph{complement} of \(G\) is denoted by \(\overline{G}\). For a subset \(S\subseteq V(G)\), the \emph{subgraph} of \(G\) induced by \(S\) is denoted by \(G[S]\). For sets \(A\) and \(B\), we write \(A\setminus B=\{x\in A:x\notin B\}\) for their set difference and
\(A\mathbin{\triangle}B=(A\setminus B)\cup(B\setminus A)\) for their symmetric difference.

A \emph{clique} of $G$ is a set of pairwise adjacent vertices. The maximum cardinality of a clique of $G$ is called the \emph{clique number} of $G$ and is denoted by $\omega(G)$.
An \emph{independent set} of $G$ is a set of pairwise non-adjacent vertices. The maximum cardinality of an independent set of $G$ is called the \emph{independence number} of $G$ and is denoted by $\alpha(G)$. A graph is \emph{complete} if its vertex set is a clique. The complete graph on $n$ vertices is denoted by $K_n$. 

For a graph $G$, a \emph{proper $k$-coloring} of $G$ is a map $\varphi:V(G)\to \{1,2,\ldots,k\}$ such that adjacent vertices receive distinct colors. The \emph{chromatic number} of $G$, denoted by $\chi(G)$, is the least integer $k$ for which $G$ admits a proper $k$-coloring. We use the convention that the chromatic number and the clique number of the empty graph are both zero. A graph \(G\) is \emph{perfect} if every induced subgraph \(H\) of \(G\) satisfies \(\chi(H)=\omega(H)\).

The \emph{path} on \(n\) vertices is denoted by $P_n=\langle v_1,v_2,\ldots,v_n\rangle$, where
$E(P_n)=\{v_iv_{i+1}:1\leq i\leq n-1\}$. More generally, a path $P=\langle v_0,v_1,\ldots,v_k\rangle$ is
called a \emph{$v_{0},v_{k}$-path}. The \emph{cycle} on \(n\) vertices, where \(n\geq3\), is denoted by $C_n=\langle u_1,u_2,\ldots,u_n,u_1\rangle$, where $E(C_n)=\{u_iu_{i+1}:1\leq i\leq n-1\}\cup\{u_nu_1\}$. The \emph{length} of a path or a cycle is its number of edges. Thus, \(P_n\) has length \(n-1\), whereas \(C_n\) has length \(n\). For vertices \(x,y\in V(G)\), the \emph{distance} \(d_G(x,y)\) is the length of a shortest \(x,y\)-path in \(G\), and such a path is called an \emph{\(x,y\)-geodesic}. For two vertices $u$ and $v$ of $G$, the \emph{interval} $I(u,v)$ between $u$ and $v$ in $G$ is the set of all vertices lying on some shortest $u,v$-path. A subgraph \(H\) of \(G\) is \emph{isometric} if $d_H(x,y)=d_G(x,y)$ for all \(x,y\in V(H)\), and it is \emph{convex} if every \(x,y\)-geodesic in \(G\) is contained in \(H\) for all \(x,y\in V(H)\). For a graph $G$, it is obvious that its convex subgraphs are isometric and its isometric ones are induced and connected. The \emph{Cartesian product} \(G\square H\) of two graphs \(G\) and \(H\) has vertex set \(V(G)\times V(H)\), where two vertices \((g,h)\) and \((g',h')\) are adjacent if either $gg'\in E(G)~\text{and}~h=h'$, or $g=g'~\text{and}~hh'\in E(H).$

The \emph{$n$-dimensional hypercube}, denoted by $Q_n$, is the graph whose vertex set consists of all binary strings of length $n$, i.e., $V(Q_n) = \{0, 1\}^n$, where two vertices are adjacent if and only if their strings differ in exactly one bit position. A graph $G$ is a \emph{partial cube} if it is an isometric subgraph of a hypercube.

The \emph{Djokovi\'{c}-Winkler relation} $\Theta$ (see \cite{Djokovic1973, Winkler1984}) is defined on $E(G)$ by: for edges $e=uv$ and $f=xy$,
\[
e\,\Theta\,f \iff d_G(u,x)+d_G(v,y) \neq d_G(u,y)+d_G(v,x).
\]
The relation $\Theta$ is clearly reflexive and symmetric. If $G$ is bipartite, this is equivalent to $d_G(u,x)=d_G(v,y)$ and $d_G(u,y)=d_G(v,x)$. Winkler \cite{Winkler1984} proved that a connected graph $G$ is a partial cube if and only if it is bipartite and $\Theta$ is an equivalence relation on $E(G)$. The Djokovi\'{c}--Winkler relation $\Theta$ partitions the edges of a partial cube into equivalence classes, called \emph{$\Theta$-classes}. In a partial cube $G$, the number of $\Theta$-classes equals the {\em isometric dimension} of $G$, denoted by $idim(G)$, defined as the smallest integer $n$ for which $G$ admits an isometric embedding into the $n$-dimensional hypercube $Q_n$. Moreover, each $\Theta$-class corresponds bijectively to a coordinate position in the binary labels assigned to the vertices of $G$.

For a $\Theta$-class $E$ of a partial cube $G$, deleting all edges of $E$ separates $G$ into two convex components, called the two \emph{semicubes} associated with $E$. We denote their vertex sets by $W_E^0$ and $W_E^1$. Equivalently, if \(xy\in E\), then, after possibly interchanging the labels \(0\) and \(1\),
\[
W_E^0=\{a\in V(G): d_G(x,a)<d_G(y,a)\},~\text{and}~
W_E^1=\{a\in V(G): d_G(y,a)<d_G(x,a)\}.
\]

The crossing graph was studied systematically by Klav\v{z}ar and Mulder \cite{KlavzarMulder2002}.

\begin{definition}\cite{KlavzarMulder2002}\label{def:crossing}
Let $G$ be a partial cube. Two $\Theta$-classes $E$ and $F$ are said to \emph{cross} if
\[
  W_E^\varepsilon\cap W_F^\delta\ne\emptyset
  ~\text{for all }\varepsilon,\delta\in\{0,1\}.
\]
The \emph{crossing graph} $G^{\#}$ of $G$ is the graph whose vertices are the $\Theta$-classes of $G$, two vertices being adjacent if and only if the corresponding $\Theta$-classes cross. In particular, the crossing graph of $K_1$ is empty graph.
\end{definition}

A \emph{direction} is a set of pairwise non-crossing $\Theta$-classes.

Median graphs, originally studied by Avann in the language of metric ternary distributive semi-lattices~\cite{Avann1961}, were later investigated systematically in graph-theoretic form by Bandelt and Barth\'elemy \cite{BandeltBarthelemy1984}. Recall that every median graph is a partial cube \cite{Ovchinnikov2011}.

\begin{definition}\cite{BandeltBarthelemy1984}\label{median}
A graph $M$ is called a \emph{median graph} if, for every triple $x,y,z\in V(M)$, there exists a unique median vertex $m=m_{M}(x,y,z)\in V(G)$ satisfying that $m\in I(x,y)\cap I(x,z)\cap I(y,z)$.
\end{definition}

Trees and hypercubes are median graphs, and the Cartesian product of a finite number of median graphs is also a median graph (see \cite{HammackImrichKlavzar2011}).

 An induced subgraph \(H\) of a median graph \(M\) is called a \emph{median subgraph} of \(M\) if, for all \(x,y,z\in V(H)\), the median \(m_M(x,y,z)\) belongs to \(V(H)\); equivalently, the median operation of \(M\) restricts to the median operation of \(H\).

Simplex graphs were introduced by Bandelt and van de Vel
\cite{BandeltVanDeVel1989}.

\begin{definition}\cite{BandeltVanDeVel1989}\label{def:simplex-graph}
Let \(G\) be a finite graph. The \emph{simplex graph} of \(G\), denoted by \(S(G)\), is the graph whose vertices are the cliques of \(G\), including the empty clique, with two cliques \(A\) and \(B\) adjacent if and only if $|A\mathbin{\triangle}B|=1.$
\end{definition}

After identifying each subset of \(V(G)\) with its
characteristic vector in \(\{0,1\}^{V(G)}\), the simplex graph \(S(G)\) is the subgraph of the hypercube induced by the characteristic vectors of the cliques of \(G\). We shall use the following fundamental property of simplex graphs.

\begin{lemma}\label{lem:simplex-median}\cite{BandeltVanDeVel1989}
For every finite graph \(G\), the simplex graph \(S(G)\) is an
isometric subgraph of \(Q_{|V(G)|}\) and is a median graph.
\end{lemma}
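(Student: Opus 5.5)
We prove the two assertions of Lemma~\ref{lem:simplex-median} directly from the characteristic-vector picture. Write $n=|V(G)|$ and view each clique $A$ as its vector in $\{0,1\}^{V(G)}$. Then $d_{Q_n}(A,B)=|A\mathbin{\triangle}B|$, and $S(G)$ is the induced subgraph of $Q_n$ on these vectors.

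\textbf{Isometry.} Since $S(G)$ is a subgraph of $Q_n$, we always have $d_{S(G)}(A,B)\ge |A\mathbin{\triangle}B|$. For the reverse inequality we exhibit a walk of length exactly $|A\mathbin{\triangle}B|$ inside $S(G)$. Starting at $A$, delete the vertices of $A\setminus B$ one at a time. Every intermediate set is a subset of $A$, hence a clique, and after these steps we reach $A\cap B$. Next, add the vertices of $B\setminus A$ one at a time. Every intermediate set is a subset of $B$, hence again a clique. This walk has length $|A\setminus B|+|B\setminus A|=|A\mathbin{\triangle}B|$, so $S(G)$ is isometric in $Q_n$. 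In particular it is connected and bipartite, and the nonempty set $\{\emptyset\}$ is always present.

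\textbf{Median property.} Since $S(G)$ is isometric in $Q_n$, intervals of $S(G)$ are the intersections with $V(S(G))$ of the corresponding hypercube intervals. In $Q_n$ the interval $I(A,B)$ consists exactly of the sets $C$ with $A\cap B\subseteq C\subseteq A\cup B$.

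For three cliques $A,B,C$, a common element of the three intervals must satisfy, coordinatewise, the majority rule. So the unique candidate is
\[
m=(A\cap B)\cup(A\cap C)\cup(B\cap C),
\]
the coordinatewise majority, which is the unique median of $A,B,C$ in $Q_n$. It remains to check that $m$ is a clique. Take $x,y\in m$. Each of $x$ and $y$ lies in at least two of the three sets $A,B,C$. Two 2-element subsets of a 3-element set intersect, so some single set among $A,B,C$ contains both $x$ and $y$. Hence $x$ and $y$ are adjacent (when distinct), and $m$ is a clique. Thus $m\in V(S(G))$ lies in all three intervals, and uniqueness is inherited from $Q_n$, so $S(G)$ is a median graph.

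The only step that needs any thought is this pigeonhole argument showing the majority set is a clique. The geodesic construction and the interval description in $Q_n$ are routine.
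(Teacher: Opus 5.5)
Your proof is correct and complete. The descend-to-$A\cap B$-then-ascend walk gives isometry, and intervals of an isometric subgraph are restrictions of the hypercube intervals. The coordinatewise majority is the only candidate median, and your pigeonhole step shows it is a clique. The paper gives no proof of this lemma (it only cites Bandelt and van de Vel), so there is nothing to compare against; what you have written is the standard direct verification.

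In view of the paper's closing problem on daisy cubes, note that your two halves use different properties of cliques. The isometry half uses only closure under subsets, so it shows verbatim that every daisy cube is a partial cube. The median half uses that being a clique is a pairwise condition, and this is exactly what is needed. For an isometric subgraph of $Q_n$, being median is equivalent to the vertex set being closed under majority, by your interval-restriction argument. Moreover, by induction on $|X|$, a majority-closed downset containing all $2$-subsets of a set $X$ with $|X|\ge 3$ contains $X$: for distinct $a,b,c\in X$, $X$ is the majority of $X\setminus\{a\}$, $X\setminus\{b\}$, $X\setminus\{c\}$. Hence a daisy cube is median if and only if it is a simplex graph; for instance, $Q_3$ minus the vertex $111$ is not median. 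This is why Theorem~\ref{thm:cop-bound-from-tdim} does not directly give upper bounds for general daisy cubes.
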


We next introduce the bipartite wheel, also known as the gear graph, which is related to the simplex graph of a cycle.

\begin{definition}\label{def:bipartite-wheel}
Let \(n\geq3\), and let \(W_{n+1}\) be the wheel consisting of a cycle $C_n=\langle u_1,u_2,\ldots,u_n,\\u_1\rangle$ and a hub \(h\) adjacent to every vertex of \(C_n\). The \emph{bipartite wheel} \(BW_n\) is obtained from \(W_{n+1}\)
by subdividing every edge of its rim exactly once.
\end{definition}

Equivalently, writing \(v_i\) for the subdivision vertex on the edge \(u_i u_{i+1}\), where indices are read modulo \(n\), we have
\[
V(BW_n)
=
\{h\}\cup\{u_i,v_i:1\leq i\leq n\}
\]
and
\[
E(BW_n)
=
\{hu_i,u_iv_i,v_iu_{i+1}:1\leq i\leq n\}.
\]
In particular, \(BW_n\) is bipartite, with bipartition
$\{u_1,u_2,\ldots,u_n\}$ and $\{h,v_1,v_2,\ldots,v_n\}$. It has \(2n+1\) vertices and \(3n\) edges. The bipartite wheels \(BW_n\) for $3\le n\le 6$ are shown in Fig. \ref{F3}.

\begin{figure}[htbp]
\centering
\subcaptionbox{$BW_{3}$\label{subfig:a}}{%
    \includegraphics[width=0.25\textwidth]{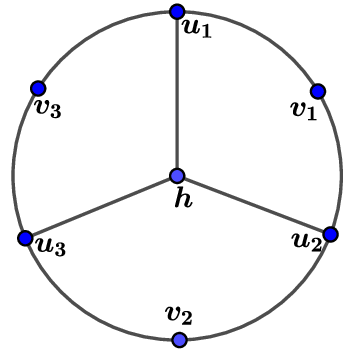}%
}%
\hfill
\subcaptionbox{$BW_{4}$\label{subfig:a}}{%
    \includegraphics[width=0.25\textwidth]{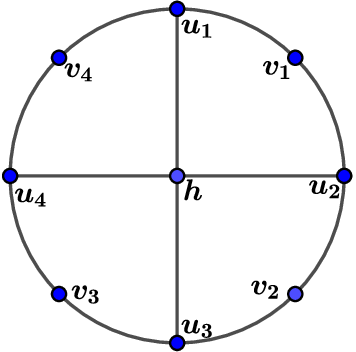}%
}%
\hfill
\subcaptionbox{$BW_{5}$\label{subfig:b}}{%
    \includegraphics[width=0.25\textwidth]{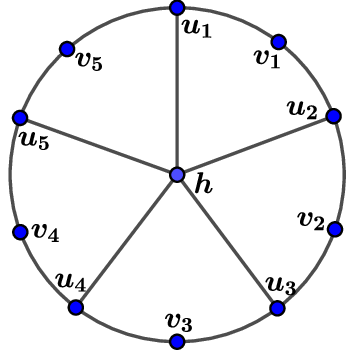}%
}%
\hfill
\subcaptionbox{$BW_{6}$\label{subfig:c}}{%
    \includegraphics[width=0.25\textwidth]{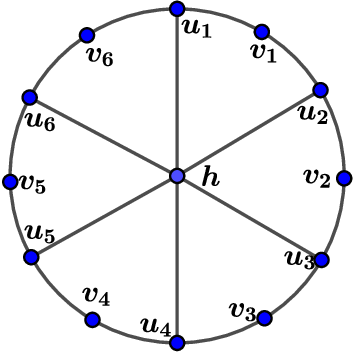}%
}%
\hfill
\caption{The bipartite wheel \(BW_n\) for $3\le n\le 6$.}
\label{F3}
\end{figure}

We next define two further subclasses of partial cubes that arise from binary strings with forbidden consecutive coordinates---the Fibonacci and Lucas cubes. The Fibonacci cube was introduced by Hsu as an interconnection topology for parallel and distributed systems~\cite{Hsu1993}.

\begin{definition}\cite{Hsu1993}
The \emph{$n$-dimensional Fibonacci cube}, denoted by $\Gamma_n$, is the subgraph of the $n$-dimensional hypercube $Q_n$ induced by all binary strings of length $n$ that do not contain two consecutive $1$'s. Formally,
\[
V(\Gamma_n)
=
\{x\in \{0,1\}^n : x \text{ does not contain the substring } ``11"\},
\]
where two vertices are adjacent if their corresponding strings differ in exactly one bit.
\end{definition}
The $n$-dimensional Fibonacci cubes $\Gamma_{n}$ for $0\le n\le 5$ are shown in Fig. \ref{F1}.

\begin{figure}[htbp]
\centering

\subcaptionbox{$\Gamma_{0}$\label{subfig:a}}{%
    \includegraphics[width=0.15\textwidth]{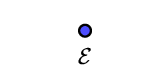}%
}%
\hfill
\subcaptionbox{$\Gamma_{1}$\label{subfig:b}}{%
    \includegraphics[width=0.12\textwidth]{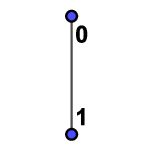}%
}%
\hfill
\subcaptionbox{$\Gamma_{2}$\label{subfig:c}}{%
    \includegraphics[width=0.15\textwidth]{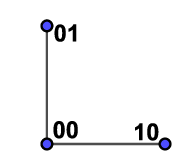}%
}%
\par 

\hfill
\subcaptionbox{$\Gamma_{3}$\label{subfig:d}}{%
    \includegraphics[width=0.25\textwidth]{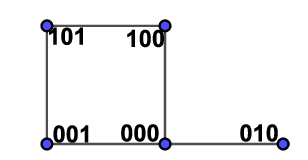}%
}%
\hfill
\subcaptionbox{$\Gamma_{4}$\label{subfig:e}}{%
    \includegraphics[width=0.25\textwidth]{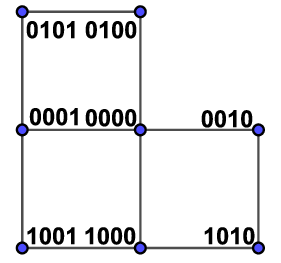}%
}%
\hfill
\hfill
\subcaptionbox{$\Gamma_{5}$\label{subfig:d}}{%
    \includegraphics[width=0.3\textwidth]{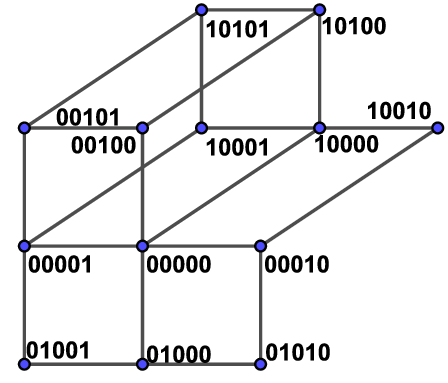}%
}%
\hfill
\caption{The $n$-dimensional Fibonacci cubes $\Gamma_{n}$ for $0\le n\le 5$.}
\label{F1}
\end{figure}

Lucas cubes were introduced by Munarini et al.~\cite{MunariniCippoSalvi2001} as cyclic analogues of Fibonacci cubes.

\begin{definition}\cite{MunariniCippoSalvi2001}
For $n\ge2$, the \emph{$n$-dimensional Lucas cube}, denoted by $\Lambda_n$, is the subgraph of the $n$-dimensional hypercube $Q_n$ induced by all binary strings
$x=x_1x_2\cdots x_n\in\{0,1\}^n$ satisfying:
\begin{enumerate}[label=(\roman*)]
 \item $x$ does not contain the substring ``11'';
    \item $x_1=1$ and $x_n=1$ do not both hold, i.e., $x$ does not start
    and end with $1$.
\end{enumerate}
Vertices are adjacent if their strings differ in exactly one bit.
\end{definition}
We adopt the conventional definitions $\Lambda_0=\Lambda_1=K_1.$ The $n$-dimensional Lucas cubes $\Lambda_{n}$ for $0\le n\le 5$ are shown in Fig. \ref{F2}.
\begin{figure}[htbp]
\centering

\subcaptionbox{$\Lambda_{0}$\label{subfig:a}}{%
    \includegraphics[width=0.15\textwidth]{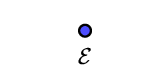}%
}%
\hfill
\subcaptionbox{$\Lambda_{1}$\label{subfig:b}}{%
    \includegraphics[width=0.15\textwidth]{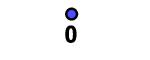}%
}%
\hfill
\subcaptionbox{$\Lambda_{2}$\label{subfig:c}}{%
    \includegraphics[width=0.15\textwidth]{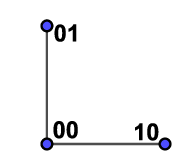}%
}%
\par 

\hfill
\subcaptionbox{$\Lambda_{3}$\label{subfig:d}}{%
    \includegraphics[width=0.25\textwidth]{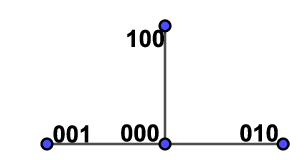}%
}%
\hfill
\subcaptionbox{$\Lambda_{4}$\label{subfig:e}}{%
    \includegraphics[width=0.25\textwidth]{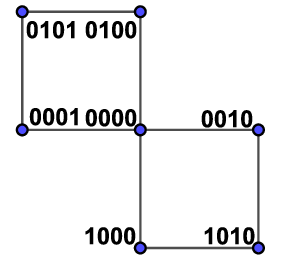}%
}%
\hfill
\hfill
\subcaptionbox{$\Lambda_{5}$\label{subfig:d}}{%
    \includegraphics[width=0.3\textwidth]{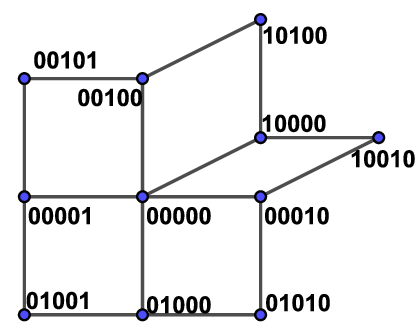}%
}%
\hfill
\caption{The $n$-dimensional Lucas cubes $\Lambda_{n}$ for $0\le n\le 5$.}
\label{F2}
\end{figure}

We use the standard edge-preserving notion of graph retraction.

\begin{definition}\label{def:retraction}\cite{Bandelt1984}
Let \(H\) be an induced subgraph of a graph \(G\). A map $r\colon V(G)\longrightarrow V(H)$ is called a \emph{retraction} if $r(h)=h$ for every $h\in V(H)$, and $r(u)r(v)\in E(H)$ whenever $uv\in E(G)$. If such a map exists, then \(H\) is called a \emph{retract} of \(G\).
\end{definition}

Berarducci and Intrigila \cite{BerarducciIntrigila1993} proved the following monotonicity result for non-expansive retractions, under which an edge may be mapped either to an edge or to a single vertex. It therefore applies, in particular, to the edge-preserving retractions adopted in Definition~\ref{def:retraction}.

\begin{lemma}\cite{BerarducciIntrigila1993}\label{lem:retract-cop}
If $H$ is a retract of $G$, then
\[
  c(H)\le c(G).
\]
\end{lemma}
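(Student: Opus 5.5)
We prove Lemma~\ref{lem:retract-cop} by a shadow strategy: $k=c(G)$ cops on $H$ imitate a winning strategy for $k$ cops on $G$ against the robber's actual position, and each real cop simply stays on the retraction image of the cop it imitates. Fix a winning strategy $\sigma$ for $k$ cops on $G$, and let $r\colon V(G)\to V(H)$ be the retraction.

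\textbf{Setting up the shadow game.} The real game is played on $H$. We simulate a virtual game on $G$ with $k$ virtual cops following $\sigma$.
\begin{enumerate}
\item At the start, the virtual cops take their initial vertices $c_1,\dots,c_k\in V(G)$ prescribed by $\sigma$.
\item The real cops start on $r(c_1),\dots,r(c_k)\in V(H)$.
\item When the robber picks a vertex $x\in V(H)\subseteq V(G)$, we let the virtual robber also occupy $x$ in $G$.
\end{enumerate}

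\textbf{Legality and the invariant.} In each round, the virtual cops move according to $\sigma$, from $c_i$ to $c_i'$ with $c_i'=c_i$ or $c_ic_i'\in E(G)$. Real cop $i$ moves from $r(c_i)$ to $r(c_i')$. This is a legal move in $H$: either $r(c_i)=r(c_i')$, or $r(c_i)r(c_i')\in E(H)$, because $r$ maps edges to edges (or, for non-expansive retractions, possibly to a single vertex). The real robber's moves are legal in $H$. Since $H$ is an induced subgraph of $G$, they are also legal in $G$, so the virtual robber copies them exactly. Hence the invariant ``real cop $i$ sits on $r(\text{virtual cop } i)$ and the two robbers coincide'' is preserved.

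\textbf{Capture.} Since $\sigma$ is winning, at some time a virtual cop $c_i$ equals the virtual robber's vertex $x\in V(H)$. At that moment real cop $i$ occupies $r(c_i)=r(x)=x$, so the robber is caught in $H$. Therefore $c(H)\le k=c(G)$.

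The only delicate point is the legality check in the second step, namely that the retraction is non-expansive. This holds by Definition~\ref{def:retraction}. We also need $H$ to be connected, which the game convention assumes.
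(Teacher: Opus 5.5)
Your shadow-strategy argument is correct. The paper gives no proof of this lemma and only cites Berarducci and Intrigila; your argument (project a winning strategy on $G$ through $r$, then transfer capture via $r(x)=x$ for $x\in V(H)$) is the standard proof of that cited result, and the connectivity of $H$ need not be assumed, since applying $r$ to a path of $G$ between two vertices of $H$ gives a walk in $H$.
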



The following theorem of Maamoun and Meyniel \cite{MaamounMeyniel1987} will be useful to our proofs.

\begin{lemma}\cite{MaamounMeyniel1987}\label{thm:MM}
If $T_1,\ldots,T_t$ are finite nontrivial trees, then
\[
  c(T_1\square\cdots\square T_t)=\lceil(t+1)/2\rceil.
\]
In particular,
\[c(Q_{d})=\lceil(d+1)/2\rceil.\]
We use the conventions \(Q_0=K_1\) and \(c(K_1)=1\).
\end{lemma}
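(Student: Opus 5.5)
This lemma is quoted from Maamoun and Meyniel, so the plan only has to show how its two inequalities fit together with the tools already stated. Write $P=T_1\square\cdots\square T_t$ and $k=\lceil (t+1)/2\rceil$.

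\textbf{Lower bound.} I would first reduce to the hypercube.
\begin{enumerate}[label=(\roman*)]
\item Each nontrivial tree $T_j$ is bipartite. Fix an edge $a_jb_j$ and send every vertex to $a_j$ or $b_j$ according to its colour class. This is an edge-preserving retraction of $T_j$ onto $K_2$.
\item Applying these maps coordinatewise gives a retraction of $P$ onto a copy of $Q_t$.
\item Lemma~\ref{lem:retract-cop} then gives $c(Q_t)\le c(P)$.
\end{enumerate}
So it suffices to show that $k-1$ cops lose on $Q_t$, that is, $m$ cops lose whenever $t\ge 2m$.

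The robber keeps the invariant that, after each of his moves, every cop is at distance at least $2$ from him. Initially he picks any vertex at distance at least $2$ from all $m$ cops; this is possible for $t\ge 2$. After the cops move, no cop is on his vertex $R$, so each cop is at distance $1$, $2$, or at least $3$. We count which vertices among $R$ and its $t$ neighbours are forbidden:
\begin{itemize}
\item a cop at distance $1$ forbids $R$ itself and one neighbour (its own vertex);
\item a cop at distance $2$ forbids exactly the two neighbours obtained by flipping one of the two differing bits;
\item a cop at distance at least $3$ forbids nothing.
\end{itemize}
If no cop is adjacent to $R$, the robber stays put. Otherwise at most $2m-1<t$ neighbours are forbidden, so he moves to a safe neighbour. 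This gives $c(Q_t)\ge m+1$ for $t\in\{2m,2m+1\}$, i.e.\ $c(Q_t)\ge k$, and hence $c(P)\ge k$.

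\textbf{Upper bound.} We must show that $k$ cops win on $P$, and I would aim for a shadowing strategy. In a single tree, a pursuer can always move toward the robber's projection, so its coordinatewise distance never increases when the robber moves elsewhere.
\begin{enumerate}[label=(\roman*)]
\item Partition the $t$ coordinates into blocks $B_1,\dots,B_{k}$, each of size at most $2$; one block has size $1$ when $t$ is odd, and for even $t$ one block may be chosen to have size $1$ as well, since $2k\ge t+1$.
\item Cop $i$ first plays so that it eventually agrees with the robber in every coordinate outside $B_i$. It does this by always moving toward the robber in some coordinate outside $B_i$ where they differ, which strictly decreases a potential. Afterwards it mirrors each robber move made outside $B_i$, so this agreement is maintained.
\item Once every cop shadows in this way, every robber move made in a coordinate of $B_i$ lets cop $i$, and the cops shadowing that coordinate, respond by moving toward the robber in that coordinate.
\end{enumerate}

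The main obstacle is the final trapping argument. The potential $\sum_i d_{B_i}(\text{cop}_i,\text{robber})$ need not decrease when cops are busy mirroring. I would first use the cop assigned to the singleton block: once it agrees everywhere outside that single tree coordinate, it simply chases the robber's projection in that tree, where the projection is eventually pushed to a leaf and pinned. I would then argue by induction on $t$, freezing one coordinate at a time so that effectively $t$ drops by $2$ while one cop is released. If this bookkeeping fails, the fallback is to cite Maamoun and Meyniel directly for this direction.

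Finally, the special case follows since $Q_d=K_2\square\cdots\square K_2$ with $d$ factors, and each $K_2$ is a nontrivial tree.
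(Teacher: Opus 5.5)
\textbf{Lower bound.} This half is correct and complete. The colour-class maps $T_j\to K_2$ combine coordinatewise into a retraction of $P$ onto a copy of $Q_t$. The robber strategy of staying at distance at least $2$ from every cop works, because one adjacent cop forbids one neighbour and every other cop forbids at most two, so at most $2m-1<t$ neighbours are excluded. The initial placement needs $m(t+1)<2^t$, which holds since $m\le t/2$ and $t\ge 2$. The paper does not prove this lemma at all; it only cites Maamoun and Meyniel. So this half is a genuine self-contained supplement.

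\textbf{Upper bound.} This half is not established, and the sketch fails at three specific points.

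First, step (ii) is false as a per-cop claim. The quantity $\sum_{j\notin B_i} d_{T_j}(c^i_j,r_j)$ is only non-increasing: a robber step away from cop $i$ outside $B_i$ cancels the cop's step. For example, take $P=Q_4$ and $B_1=\{1,2\}$. Then cop $1$ alone must catch the robber's shadow on $K_2\square K_2\cong C_4$, which one cop cannot do. Progress can only be extracted globally: all potentials are non-increasing, hence eventually constant, and in that stable phase passes and moves inside a block with positive potential are excluded. This is the style of the stable-phase argument in Section~5. In particular, you cannot assume that every cop ever reaches the shadowing state.

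Second, the response rule in (iii) degenerates into mirroring. Suppose all cops shadow, cop $i$ also agrees with the robber in a coordinate $b\in B_i$, and all cop--robber distances are at least $2$. Then the robber toggles along one edge of $T_b$. Every cop, including cop $i$ (which moves ``toward the robber in that coordinate''), simply copies the move, so the position repeats forever. Such a position is reachable, for instance in $P_5\square K_2\square P_5$ with blocks $\{1,2\},\{3\}$.

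Third, the proposed induction does not close. The singleton-block cop keeps its coordinate pinned only while it keeps shadowing, so it is never released. Pinning also removes one coordinate, not two. For odd $t$ the residual game has $k-1$ cops on $t-1$ coordinates, while $\lceil((t-1)+1)/2\rceil=k$, so the induction hypothesis for the ordinary game does not apply. What remains is a game against a robber who may no longer pass, and that needs its own, stronger statement.

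The missing ingredient is exactly the core of Maamoun--Meyniel's argument. You need a strategy for one shadowing cop inside a two-tree block $T_a\square T_b$ with a potential that strictly drops whenever the robber moves in that block, combined with a global termination argument. Without it, your upper bound rests on the citation, which is all the paper itself does.
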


\section{Bounds on the cop number of median graphs and simplex graphs}

In this section, 
we prove an upper bound for median graphs in terms of their tree-dimension, which improves Crawford and Ir\v{s}i\v{c} Chenoweth's bound \cite{CrawfordIrsicChenoweth2025}. Subsequently, we apply this result to simplex graphs to derive general upper bounds for their cop numbers.

The notion of tree-dimension of a finite graph and the fundamental theorem of Bandelt and van de Vel \cite{BandeltVanDeVel1989} are presented below.

\begin{definition}\cite{Eppstein2005}
The \emph{tree-dimension} of a finite graph $G$, denoted by $tdim(G)$, is the minimum nonnegative integer $t$ such that $G$ admits an isometric embedding into a Cartesian product of $t$ trees.
\end{definition}

Bandelt and van de Vel \cite{BandeltVanDeVel1989} characterized the tree-dimension of a finite median graph in terms of directions of its \(\Theta\)-classes. Building upon their result, we extend this characterization to arbitrary partial cubes by determining their tree-dimension in what follows. 

\begin{theorem}\label{thm:tree-dimension-crossing}
Let \(G\) be a finite partial cube and $G\neq K_1$. Then
\[
tdim(G)=\chi(G^\#).
\]
\end{theorem}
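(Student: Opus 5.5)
We prove the two inequalities $tdim(G)\le\chi(G^\#)$ and $tdim(G)\ge\chi(G^\#)$ separately. Throughout, $G\neq K_1$ guarantees at least one $\Theta$-class, so $\chi(G^\#)\ge 1$.

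\textbf{Upper bound.} Fix a proper $k$-coloring of $G^\#$ with $k=\chi(G^\#)$. Each color class $\mathcal{D}_j$ is a set of pairwise non-crossing $\Theta$-classes, that is, a direction.

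For each $j$ we build a tree $T_j$ as follows. Define an equivalence on $V(G)$ by letting $u\sim_j v$ when $u$ and $v$ lie on the same side of every $\Theta$-class in $\mathcal{D}_j$. The vertices of $T_j$ are the $\sim_j$-classes, and two classes are adjacent when some edge of $G$ whose $\Theta$-class lies in $\mathcal{D}_j$ joins them. Equivalently, contract every edge of $G$ whose $\Theta$-class is not in $\mathcal{D}_j$.

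To see that $T_j$ is a tree, we use that the half-spaces $W_E^\varepsilon$, $E\in\mathcal{D}_j$, form a laminar system: for non-crossing $E,F$, one of the four intersections $W_E^\varepsilon\cap W_F^\delta$ is empty, so some pair of half-spaces is nested. A pairwise nested/disjoint family of splits of a finite set is a compatible split system. By the standard Buneman correspondence such a system is realized by a tree, namely the tree $T_j$ above: its edges correspond bijectively to $\mathcal{D}_j$, and $d_{T_j}$ between two classes equals the number of classes in $\mathcal{D}_j$ separating them. Connectivity of $T_j$ follows from connectivity of $G$.

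Now define $\phi:V(G)\to T_1\square\cdots\square T_k$ by $v\mapsto([v]_1,\dots,[v]_k)$. In a partial cube, $d_G(u,v)$ is the number of $\Theta$-classes separating $u$ and $v$. Summing over the color classes, which partition the $\Theta$-classes, gives $d_G(u,v)=\sum_j d_{T_j}([u]_j,[v]_j)$. Hence $\phi$ is an isometric embedding, and in particular injective.

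\textbf{Lower bound.} Let $\phi:G\to T_1\square\cdots\square T_t$ be an isometric embedding. The product of trees is a partial cube whose $\Theta$-classes are indexed by pairs $(i,e)$ with $e\in E(T_i)$. For each $\Theta$-class $E$ of $G$, pick an edge $uv\in E$. Then $\phi(u)\phi(v)$ is an edge of the product, lying in some coordinate $i$ with tree edge $e$. Isometry preserves $\Theta$, since $\Theta$ is defined by distances. So all edges of $E$ map into the single product class $(i,e)$. Moreover, the semicubes of $E$ are exactly the preimages of the two sides of $e$ in $T_i$.

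Color $E$ by $i$. Suppose two classes $E$ and $F$ receive the same color $i$, with tree edges $e\neq f$ in $T_i$. Two edges of a tree do not cross: some side of $e$ is disjoint from some side of $f$. Taking preimages, the corresponding semicubes of $E$ and $F$ are disjoint, so $E$ and $F$ do not cross. If instead $e=f$, then $E$ and $F$ have the same semicubes, which forces $E=F$. Hence this coloring is proper on $G^\#$, and $\chi(G^\#)\le t$.

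\textbf{Main obstacle.} The main obstacle is the upper bound's claim that $T_j$ is a tree with the stated distance formula. This requires a careful argument that compatible splits of $V(G)$, coming from convex semicubes, yield an acyclic contraction. I would prove it by induction on $|\mathcal{D}_j|$: repeatedly remove a $\Theta$-class whose half-space is minimal in the laminar order, and check that it corresponds to a leaf edge.
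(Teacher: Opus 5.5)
Your proposal is correct and follows essentially the same route as the paper. For the lower bound you color each $\Theta$-class by the tree factor containing its image, and for the upper bound you contract, for each color class of an optimal coloring of $G^\#$, all edges outside that class, then embed $G$ into the product of the resulting trees via the distance-sum identity. The only difference is how you show each contraction is a tree: you appeal to compatible splits and Buneman's theorem, whereas the paper uses the contraction property of partial cubes and observes that a shortest cycle would contain two crossing $\Theta$-classes. One small correction: the half-spaces are laminar only after you orient each split away from a fixed base vertex.
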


\begin{proof}

Set \(t=tdim(G)\). There exist trees \(T_1,\ldots,T_t\) and an isometric embedding
\[
\varphi\colon G\longrightarrow T_1\square\cdots\square T_t.
\]
Each \(\Theta\)-class of \(G\) is mapped to a \(\Theta\)-class belonging to exactly one tree factor. Two \(\Theta\)-classes assigned to the same factor cannot cross, since the \(\Theta\)-classes of a tree are pairwise non-crossing. Thus, assigning to each \(\Theta\)-class the index of its tree factor gives a proper \(t\)-coloring of \(G^\#\). Therefore, $\chi(G^\#)\leq t=tdim(G)$.

Conversely, let \(k=\chi(G^\#)\), and let $\mathcal{E}_1,\ldots,\mathcal{E}_k$ be the color classes of a proper \(k\)-coloring of \(G^\#\). The \(\Theta\)-classes in each \(\mathcal{E}_i\) are pairwise non-crossing.

For each \(i\), contract all edges in \(\Theta\)-classes of \(G\) that do not belong to \(\mathcal{E}_i\), and denote the resulting quotient graph by \(T_i\). By the standard contraction property of partial cubes, \(T_i\) is a partial cube whose \(\Theta\)-classes are precisely the members of \(\mathcal{E}_i\). Since these classes are pairwise non-crossing, \(T_i\) is a tree. In fact, a shortest cycle in \(T_i\) would be isometric and would contain two crossing \(\Theta\)-classes.

For \(x\in V(G)\), let \(x_i\) denote its image in \(T_i\), and define $\pi(x)=(x_1,\ldots,x_k)$. For all \(x,y\in V(G)\),
\[
d_{T_i}(x_i,y_i)
=
\bigl|
\{E\in\mathcal{E}_i:
E\text{ separates }x\text{ and }y\}
\bigr|.
\]
Therefore,
\begin{align*}
d_{T_1\square\cdots\square T_k}
\bigl(\pi(x),\pi(y)\bigr)
&=
\sum_{i=1}^{k}d_{T_i}(x_i,y_i)\\
&=
\bigl|
\{E:
E\text{ is a \(\Theta\)-class separating }x\text{ and }y\}
\bigr|\\
&=d_G(x,y).
\end{align*}
Thus, \(\pi\) is an isometric embedding of \(G\) into a Cartesian product of \(k\) trees. Hence, $tdim(G)\leq k=\chi(G^\#).$

Combining the two inequalities completes the proof.
\end{proof}

Now, we consider median graphs. Recall the standard result on median graphs that will be used below.

\begin{lemma} \label{thm:Bandelt-retract}\cite{Bandelt1984} Let \(H\) be a connected induced subgraph of a median graph \(M\) with \(|V(H)|\ge 2\). Then \(H\) is a retract of \(M\) if and only if \(H\) is a median subgraph of \(M\).
\end{lemma}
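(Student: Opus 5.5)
This is a known result cited from Bandelt (1984), so the plan is to reconstruct the standard argument in both directions. The tools are the median operation and the gate/convexity structure of median graphs.

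\emph{Retract $\Rightarrow$ median subgraph.} Let \(r\colon V(M)\to V(H)\) be a retraction. Retractions are non-expansive, so \(d_H(x,y)\le d_M(r(x'),r(y'))\)-type estimates hold. More precisely, for \(x,y\in V(H)\), any \(x,y\)-geodesic of \(M\) maps under \(r\) to a walk in \(H\) of no greater length. Hence \(d_H(x,y)=d_M(x,y)\), and \(H\) is isometric.

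Now take \(x,y,z\in V(H)\) and put \(m=m_M(x,y,z)\). Since \(m\) lies on geodesics between each pair, we have \(d(x,m)+d(m,y)=d(x,y)\) and the analogous identities for the other two pairs. Applying \(r\) and non-expansiveness gives
\[
d(x,r(m))+d(r(m),y)\le d(x,m)+d(m,y)=d(x,y).
\]
The same holds for the other pairs, so all three inequalities are equalities by the triangle inequality. Thus \(r(m)\in I(x,y)\cap I(x,z)\cap I(y,z)\), and uniqueness of medians gives \(r(m)=m\in V(H)\).

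\emph{Median subgraph $\Rightarrow$ retract.} This is the harder direction. The approach is to build the retraction one \(\Theta\)-class (or one vertex) at a time, via convex closure. Let \(K\) be the convex hull of \(H\) in \(M\).

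First, I would show that \(M\) retracts onto \(K\) via the gate map. Convex sets in median graphs are gated, and the gate map \(g\colon V(M)\to V(K)\) sends edges to edges or single vertices. Adjacent vertices have gates at distance at most one, since \(\Theta\)-classes not meeting \(K\) are collapsed. The paper's notion requires edges to go to edges, but in a bipartite graph a non-expansive map onto an induced subgraph can be adjusted parity-wise. Alternatively, I would accept the non-expansive notion, as Lemma~\ref{lem:retract-cop} does.

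Second, I would retract \(K\) onto \(H\). Proceed by induction on \(|V(K)\setminus V(H)|\), using that \(H\) is median-closed and connected. Pick a vertex \(v\in K\setminus H\) and find a retraction of \(K\) onto a median-closed proper subgraph containing \(H\). The standard route is Bandelt's: median-closed subgraphs of median graphs are intersections of ``median-stable'' halfspace systems. Alternatively, one can use that a median subgraph is itself a median graph whose \(\Theta\)-classes are restrictions of those of \(M\). Then define \(r(v)\) coordinatewise in the hypercube embedding by majority/median voting over a suitable triple of \(H\)-vertices.

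The main obstacle is this second step: producing a well-defined edge-preserving map onto \(H\) rather than onto its convex hull. I would first try the explicit formula \(r(v)=m(a,b,v)\) iterated through a \(\Theta\)-class elimination. If that fails, I would defer to Bandelt's original argument, which the paper cites rather than reproves.
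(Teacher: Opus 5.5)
The paper gives no proof of this lemma; it only quotes Bandelt (1984). Your direction \emph{retract $\Rightarrow$ median subgraph} is correct and complete. An edge-preserving retraction sends an $M$-geodesic between two vertices of $H$ to an $H$-walk of the same length, so $H$ is isometric. Then $r(m)\in I(x,y)\cap I(x,z)\cap I(y,z)$, and uniqueness of medians gives $m=r(m)\in V(H)$. (Your first inequality is garbled, but the sentence after it states the right estimate.)

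\textbf{The converse is not proved, and it is the whole content of Bandelt's theorem.} Your second step, retracting the convex hull $K$ onto $H$, is only a list of candidate tools, and none of them works as stated.
\begin{itemize}
\item The map $v\mapsto m(a,b,v)$ is the gate map onto $I(a,b)$. It fixes $H$ only if $H\subseteq I(a,b)$, and its image is all of $I(a,b)$. So it is a retraction onto $H$ only when $H$ is an interval.
\item Intersections of halfspaces are convex, but median subgraphs need not be. For example, $H=\{000,100,010,001\}\cong K_{1,3}$ is median-closed in $Q_3$, yet its convex hull is $Q_3$.
\item The same example rules out peeling $K$ one vertex at a time. For every $v$, the graph $Q_3-v$ is not median-closed, since the median of the three neighbours of $v$ is $v$. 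It is also not a retract of $Q_3$, since no vertex of $Q_3-v$ is within distance one of all three neighbours of $v$. Hence any chain of retractions from $Q_3$ down to $H$ must remove several vertices at once: e.g.\ first $\{011,111\}$ via $011\mapsto000$, $111\mapsto100$, then $110,101\mapsto000$. You give no rule for choosing such sets or defining such maps.
\item ``Majority voting over a suitable triple'' leaves open exactly the point at issue: how to choose the triple so that $H$ is fixed and adjacency is preserved.
\end{itemize}
Deferring to Bandelt is what the paper does, but then your sketch contributes no argument for this direction.

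There is also a smaller problem in your first step. The gate map onto $K$ collapses every edge whose $\Theta$-class has no edge in $K$. It is therefore not a retraction in the paper's edge-preserving sense, and your ``parity adjustment'' is unjustified. Flipping a fixed coordinate, as in the daisy-cube retraction at the end of the paper, works there only because the target is a full subcube. In general you would need something like a homomorphism $\varphi\colon K\to K$ with $\varphi(x)$ adjacent to $x$ for every $x$, and you supply none. Falling back on non-expansive retractions proves a weaker statement than the one given.
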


By Lemma \ref{thm:Bandelt-retract}, we obtain the following corollary immediately.
\begin{corollary}\label{cor:median-retract}
    Let $M$ be a median graph and $H$ an isometric subgraph of $M$. If $H$ is median, then it is a retract of $M$.
\end{corollary}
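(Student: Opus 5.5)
To derive Corollary~\ref{cor:median-retract} from Lemma~\ref{thm:Bandelt-retract}, we need to check the lemma's two hypotheses, namely that \(H\) is a connected induced subgraph with at least two vertices, and that \(H\) is a median subgraph of \(M\). The trivial case \(|V(H)|=1\) is handled separately: a single vertex \(h\) is a retract in the non-expansive sense by mapping everything to \(h\). (Under the edge-preserving Definition~\ref{def:retraction} this degenerate case has to be excluded or read with the convention that edges may collapse, as in Lemma~\ref{lem:retract-cop}.) From now on assume \(|V(H)|\ge 2\).

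The first hypothesis is immediate from isometry. As noted in the preliminaries, every isometric subgraph is induced and connected: distances in \(H\) agree with distances in \(M\), so any two vertices of \(H\) adjacent in \(M\) are at distance \(1\) in \(H\). Hence \(H\) is a connected induced subgraph of \(M\).

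The main step is showing that \(H\) is a median subgraph, i.e.\ that for \(x,y,z\in V(H)\) the median \(m_M(x,y,z)\) lies in \(V(H)\). Since \(H\) is itself a median graph, it has a median \(m'=m_H(x,y,z)\in V(H)\), which lies in \(I_H(x,y)\cap I_H(x,z)\cap I_H(y,z)\). Because \(H\) is isometric, \(d_H(x,m')+d_H(m',y)=d_H(x,y)\) transfers verbatim to \(d_M\), and likewise for the other two pairs. So \(m'\in I_M(x,y)\cap I_M(x,z)\cap I_M(y,z)\). By uniqueness of the median in \(M\) (Definition~\ref{median}), \(m'=m_M(x,y,z)\), and therefore \(m_M(x,y,z)\in V(H)\). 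This shows that the median operation of \(M\) restricts to that of \(H\).

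Lemma~\ref{thm:Bandelt-retract} now applies and gives that \(H\) is a retract of \(M\). The only delicate points are transferring interval membership through the isometry and the degenerate one-vertex case; everything else is a direct citation.
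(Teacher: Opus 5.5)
Your proof is correct and follows the paper's route. The paper derives the corollary immediately from Lemma~\ref{thm:Bandelt-retract}, and you supply the two checks it leaves implicit: an isometric subgraph is connected and induced, and isometry together with uniqueness of medians in $M$ forces $m_M(x,y,z)=m_H(x,y,z)\in V(H)$.

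Your caveat about $|V(H)|=1$ is also well taken. Under the edge-preserving Definition~\ref{def:retraction}, $K_1$ is not a retract of any $M$ that has an edge, so the corollary implicitly needs $|V(H)|\ge 2$. That condition does hold wherever the paper applies the corollary.
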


Recall that the isometric dimension of a partial cube $G$ is denoted by $idim(G)$. Crawford and Ir\v{s}i\v{c} Chenoweth \cite{CrawfordIrsicChenoweth2025} obtained that $c(M)\le \left\lceil (idim(M)+1)/2 \right\rceil$ for a median graph $M$. 
The following result replaces the isometric dimension by the generally smaller tree-dimension.

\begin{theorem}
\label{thm:cop-bound-from-tdim}
Let $M$ be a finite median graph. Then
\[
c(M)\le \left\lceil (tdim(M)+1)/2 \right\rceil=\lceil (\chi(M^{\#})+1)/2 \rceil.
\]
\end{theorem}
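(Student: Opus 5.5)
The plan is to realize $M$ as a retract of a Cartesian product of $t=tdim(M)$ nontrivial trees, and then combine Lemma~\ref{lem:retract-cop} with Lemma~\ref{thm:MM}. The equality $\lceil (tdim(M)+1)/2\rceil=\lceil(\chi(M^\#)+1)/2\rceil$ is immediate from Theorem~\ref{thm:tree-dimension-crossing} when $M\neq K_1$. The case $M=K_1$ is trivial: $tdim(K_1)=0$, $\chi$ of the empty crossing graph is $0$, and $c(K_1)=1=\lceil 1/2\rceil$. So assume $M\neq K_1$ and let $t=tdim(M)\ge 1$.

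\textbf{Step 1: embed $M$ in a product of trees.} By the proof of Theorem~\ref{thm:tree-dimension-crossing}, I would take a proper $t$-coloring of $M^\#$ and form the quotient trees $T_1,\ldots,T_t$. This gives an isometric embedding $\pi\colon M\to P:=T_1\square\cdots\square T_t$. Since $t$ is minimal, every color class is nonempty, so each $T_i$ has at least one edge and is nontrivial. Lemma~\ref{thm:MM} then yields $c(P)=\lceil (t+1)/2\rceil$.

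\textbf{Step 2: show $\pi(M)$ is a retract of $P$.} The product $P$ is a median graph, since it is a product of trees. The image $H=\pi(M)$ is an isometric subgraph of $P$, and it is median because it is isomorphic to $M$. By Corollary~\ref{cor:median-retract}, $H$ is a retract of $P$. Lemma~\ref{lem:retract-cop} then gives $c(M)=c(H)\le c(P)=\lceil (t+1)/2\rceil$.

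\textbf{Main obstacle: the retract step.} Corollary~\ref{cor:median-retract} as stated says only that $H$ is median in itself. Lemma~\ref{thm:Bandelt-retract}, however, needs $H$ to be a median \emph{subgraph}, meaning $m_P(x,y,z)\in V(H)$ for all $x,y,z\in V(H)$. I would verify this directly.

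Take $x,y,z\in V(H)$ and let $m=m_H(x,y,z)$. Because $H$ is isometric in $P$, the relation $m\in I_H(x,y)$ gives $d_P(x,m)+d_P(m,y)=d_P(x,y)$. The same holds for the other two pairs, so $m$ is a median of $x,y,z$ in $P$. By uniqueness of medians in $P$, we get $m_P(x,y,z)=m\in V(H)$.

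With this check in place the corollary applies legitimately and the chain of inequalities completes the proof.
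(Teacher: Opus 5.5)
Your proposal is correct and follows the paper's proof. The paper likewise embeds $M$ isometrically in a product $P$ of $t=tdim(M)$ nontrivial trees, uses Corollary~\ref{cor:median-retract} to make $\pi(M)$ a retract of $P$, and concludes with Lemmas~\ref{lem:retract-cop} and~\ref{thm:MM}. You also check explicitly that an isometric subgraph which is median in itself is closed under $m_P$, so that Lemma~\ref{thm:Bandelt-retract} genuinely applies; this correctly fills in the step the paper treats as immediate when deriving Corollary~\ref{cor:median-retract}, and your minimality argument for the nontriviality of the tree factors is likewise sound.
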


\begin{proof}
When $M=K_1$, $c(M)=1$, $tdim(M)=1$, $\chi(M^{\#})=0$. Then $c(M)= \left\lceil (tdim(M)+1)/2 \right\rceil$ $=\lceil (\chi(M^{\#})+1)/2 \rceil=1$. The inequality holds.

Now, assume $M\neq K_1$, i.e., $|V(M)|\geq 2$. By the definition of \(t=tdim(M)\), there exist nontrivial trees \(T_1,\ldots,T_t\) and an isometric embedding
\[
\phi\colon M\longrightarrow
P:=T_1\square\cdots\square T_t.
\]
Since each \(T_i\) is a tree, \(P\) is a median graph. $\phi(M)$ is isomorphic to $M$, so it is median. By Corollary \ref{cor:median-retract}, 
\(\phi(M)\) is a retract of \(P\). Therefore, by Lemma \ref{lem:retract-cop}, $c(M)=c(\phi(M))\le c(P).$ By Lemma \ref{thm:MM},
\[ c(P) = \left\lceil (t+1)/2\right\rceil . \] 

Combined with Theorem \ref{thm:tree-dimension-crossing}, 
\[ c(M) \le c(P) =\left\lceil (t+1)/2\right\rceil = \lceil (\chi(M^\#)+1)/2\rceil . \]

This proves the theorem.
\end{proof}

The improvement in Theorem \ref{thm:cop-bound-from-tdim} can be arbitrarily large. If \(T\) is a tree with \(m\) edges, then every edge of \(T\) forms a distinct \(\Theta\)-class, so the smallest hypercube into which \(T\) embeds isometrically has dimension \(m\). In contrast, \(tdim(T)=1\). The previous estimate therefore gives \(\lceil(m+1)/2\rceil\), whereas our bound gives \(c(T)\leq 1\), which is exact. On the other hand, \(tdim(Q_d)=d\), so the new bound retains equality for hypercubes. Thus, the tree-dimension formulation strengthens the general estimate for median graphs without losing sharpness for the extremal examples given by hypercubes.

The remainder of this section is devoted to establishing bounds on the cop numbers of simplex graphs. To obtain the lower bound for the cop number of a simplex graph, we first show that every clique of the base graph induces a retract that is a hypercube.

\begin{lemma}\label{lem:clique-hypercube-retract}
Let \(G\) be a finite graph, and let \(K\) be a nonempty clique of
\(G\). Then the family $\{A:A\subseteq K\}$ induces a subgraph of \(S(G)\) isomorphic to \(Q_{|K|}\), and this induced hypercube is a retract of \(S(G)\).
\end{lemma}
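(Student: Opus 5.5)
The plan is to write down an explicit retraction rather than appeal to Lemma \ref{thm:Bandelt-retract}, although the median-subgraph route would also work. First I check that the family is a hypercube. Every subset of a clique is again a clique, so each $A\subseteq K$ is a vertex of $S(G)$. Identify $A$ with its characteristic vector restricted to the coordinates in $K$. Two subsets $A,B\subseteq K$ are adjacent in $S(G)$ exactly when $|A\mathbin{\triangle}B|=1$, which is precisely adjacency in $Q_{|K|}$. Hence the induced subgraph $H$ on $\{A:A\subseteq K\}$ is isomorphic to $Q_{|K|}$.

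Next, for the retraction I define $r\colon V(S(G))\to V(H)$ by $r(C)=C\cap K$. This is well defined because $C\cap K\subseteq K$. It fixes $H$, since $A\cap K=A$ for every $A\subseteq K$.

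It remains to check that edges map to edges. Let $CD$ be an edge of $S(G)$, so $C\mathbin{\triangle}D=\{v\}$ for a single vertex $v$. Then
\[
(C\cap K)\mathbin{\triangle}(D\cap K)=(C\mathbin{\triangle}D)\cap K=\{v\}\cap K.
\]
If $v\in K$, the images differ in exactly one element and so are adjacent in $H$. If $v\notin K$, the images coincide, and the edge collapses to a single vertex.

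The only delicate point is this collapse, because Definition~\ref{def:retraction} as literally stated asks for edge-preservation. I would resolve it by noting that the paper explicitly uses Lemma~\ref{lem:retract-cop} in the non-expansive sense, where an edge may be sent to an edge or to a vertex. The map $r$ is exactly such a non-expansive retraction, which is all that is needed afterwards.

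If strict edge-preservation were insisted on, I would instead invoke Lemma~\ref{thm:Bandelt-retract}. By Lemma~\ref{lem:simplex-median}, $S(G)$ is median, and the median of three cliques is computed coordinatewise by majority. The majority of three subsets of $K$ is again a subset of $K$, so $H$ is a median subgraph. It is also connected with at least two vertices because $K\neq\emptyset$. Lemma~\ref{thm:Bandelt-retract} then gives that $H$ is a retract.
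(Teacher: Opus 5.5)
Your fallback argument is correct and is essentially the paper's proof, since both rest on Bandelt's characterization (Lemma~\ref{thm:Bandelt-retract}). The paper shows that the subcube is isometric in $S(G)$, because both sit isometrically in $Q_{|V(G)|}$, and that it is itself median; Corollary~\ref{cor:median-retract} then finishes. You instead verify median-closure directly. Your majority claim deserves one line of justification. Since $S(G)$ is isometric in $Q_{|V(G)|}$ (Lemma~\ref{lem:simplex-median}), its intervals are the hypercube intervals intersected with $V(S(G))$. For $A,B,C\subseteq K$ the majority set lies in $K$, so it is a clique lying in all three intervals, and it is the median by uniqueness.

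Your primary map $C\mapsto C\cap K$ does not by itself prove the lemma as stated. It collapses every edge $CD$ with $C\mathbin{\triangle}D=\{v\}$ and $v\notin K$, so it is not a retraction in the sense of Definition~\ref{def:retraction}. It gives only the non-expansive version. As you note, that version suffices for Lemma~\ref{lem:retract-cop} and hence for the lower bound in Theorem~\ref{thm:main-simplex}.

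The map is easily repaired with the parity twist the paper itself uses for daisy cubes in the concluding section. Fix $k_0\in K$, which is where $K\neq\varnothing$ is used. Set $r(C)=C\cap K$ if $|C\setminus K|$ is even, and $r(C)=(C\cap K)\mathbin{\triangle}\{k_0\}$ if it is odd. Then:
\begin{itemize}
\item every subset of $K$ is a clique, so $r$ lands in the subcube, and $r$ fixes it because $C\setminus K=\varnothing$ for $C\subseteq K$;
\item an edge flipping $v\in K$ preserves the parity, so the two images differ exactly in $v$;
\item an edge flipping $v\notin K$ changes the parity while $C\cap K$ is unchanged, so the two images differ exactly in $k_0$.
\end{itemize}
This gives an explicit edge-preserving retraction, making the lemma self-contained and independent of Bandelt's theorem.
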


\begin{proof}
Every subset of \(K\) is a clique of \(G\), and two subsets of \(K\)
are adjacent in \(S(G)\) exactly when they differ in one element. Hence, these subsets induce a hypercube isomorphic to \(Q_{|K|}\).

Set $|V(G)|=n$. Since both $S(G)$ and $Q_{|K|}$ are isometric subgraph of $Q_n$, for any $a,b\in V(Q_{|K|})$, $d_{Q_{|K|}}(a,b)=d_{Q_n}(a,b)=d_{S(G)}(a,b)$. Thus, $Q_{|K|}$ is an isometric subgraph of $S(G)$. Combined with the fact that hypercubes are median graphs,  by Corollary \ref{cor:median-retract}, \(Q_{|K|}\) is a retract of \(S(G)\).
\end{proof}

The following standard identity was proved by Klav\v{z}ar and Mulder \cite{KlavzarMulder2002}.

\begin{lemma}\label{lem:simplex-crossing}\cite{KlavzarMulder2002}
For every finite graph \(G\),
\[
        (S(G))^\#\cong G.
\]
\end{lemma}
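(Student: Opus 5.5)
We prove $(S(G))^\#\cong G$ by an explicit identification of the $\Theta$-classes of $S(G)$ with the vertices of $G$, followed by a check that crossing corresponds to adjacency. By Lemma~\ref{lem:simplex-median}, $S(G)$ is an isometric subgraph of $Q_{|V(G)|}$, with vertices the characteristic vectors of cliques. For each $v\in V(G)$ let $E_v$ be the set of edges $\{A,A\cup\{v\}\}$ of $S(G)$, where $A$ is a clique with $v\notin A$ and $A\cup\{v\}$ a clique. Every edge of $S(G)$ lies in exactly one $E_v$, since adjacent cliques differ in exactly one vertex. The plan is to show that the $E_v$ are precisely the $\Theta$-classes. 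Then the map $v\mapsto E_v$ is a bijection $V(G)\to V((S(G))^\#)$, and we verify it preserves adjacency both ways.

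\emph{Step 1: the $E_v$ are the $\Theta$-classes.} Because the embedding into $Q_n$ is isometric, $d_{S(G)}(A,B)=|A\triangle B|$. Take edges $e=\{A,A\cup\{u\}\}$ and $f=\{B,B\cup\{v\}\}$ and compute with the bipartite criterion $d(A,B)=d(A\cup\{u\},B\cup\{v\})$ together with $d(A,B\cup\{v\})=d(A\cup\{u\},B)$. If $u=v$, adding the same element to both sides keeps the symmetric difference unchanged on the relevant coordinates, so $e\,\Theta\,f$. If $u\ne v$, the coordinate $u$ contributes differently to $d(A,B\cup\{v\})$ and to $d(A\cup\{u\},B)$ while $v$ compensates, and a direct count shows the defining equality fails. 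Hence $e\,\Theta\,f$ holds iff $u=v$. Each $E_v$ is nonempty, since $\emptyset$--$\{v\}$ is an edge, so there are exactly $|V(G)|$ classes. The semicubes are then $W^1_{E_v}=\{A: v\in A\}$ and $W^0_{E_v}=\{A:v\notin A\}$. This follows because the edge $\emptyset\{v\}$ lies in $E_v$, and $d(\emptyset,A)<d(\{v\},A)$ iff $v\notin A$.

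\emph{Step 2: crossing iff adjacent.} $E_u$ and $E_v$ cross iff all four sets $\{A: [u\in A]=\varepsilon,\ [v\in A]=\delta\}$ are nonempty. The cliques $\emptyset$, $\{u\}$ and $\{v\}$ always realize three of the four patterns. The fourth pattern, a clique containing both $u$ and $v$, exists iff $uv\in E(G)$, and in that case $\{u,v\}$ itself is such a clique. Hence $E_u$ crosses $E_v$ iff $uv\in E(G)$, which gives the isomorphism. The degenerate case $G$ empty gives $S(G)=K_1$, which is consistent with the convention in Definition~\ref{def:crossing}.

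The only real computation is the $u\ne v$ case of Step 1. One could alternatively avoid it by invoking the general fact that, for an isometric subgraph of a hypercube, $\Theta$-classes are exactly the sets of edges flipping a fixed coordinate, provided every coordinate is flipped by some edge. Here each coordinate $v$ is flipped by the edge $\emptyset\{v\}$.
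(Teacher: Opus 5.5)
Your proof is correct. The paper gives no argument for this lemma; it is quoted from Klav\v{z}ar and Mulder. So your proposal is a self-contained verification where the paper only cites, and it is the natural one:
\begin{itemize}
\item identify each $\Theta$-class with a coordinate $v\in V(G)$ via $E_v$;
\item read off the semicubes $\{A: v\in A\}$ and $\{A: v\notin A\}$ from the edge $\emptyset\{v\}$;
\item observe that $\emptyset$, $\{u\}$ and $\{v\}$ always fill three of the four quadrants, so crossing hinges only on whether some clique contains both $u$ and $v$.
\end{itemize}
Showing directly that $e\,\Theta\,f$ iff $u=v$ also makes $\Theta$ visibly an equivalence relation here. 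Injectivity of $v\mapsto E_v$ follows because the $E_v$ are disjoint and nonempty, so no appeal to Winkler's theorem is needed.

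One step is worded too loosely: the $u\neq v$ case of Step~1. Saying that $v$ compensates does not explain why the criterion fails. Write it out as follows. Coordinates other than $u,v$ contribute equally to all four distances. Put $\beta=[u\in B]$ and $\alpha=[v\in A]$, recalling $u\notin A$ and $v\notin B$. Then $d(A,B)=d(A\cup\{u\},B\cup\{v\})$ reduces to $\alpha+\beta=1$, and $d(A,B\cup\{v\})=d(A\cup\{u\},B)$ reduces to $\alpha=\beta$. These two conditions are incompatible, so the pair is not in relation $\Theta$.

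Your alternative route is even cleaner. $\Theta$ is defined purely metrically and $S(G)$ is isometric in $Q_{|V(G)|}$, so $\Theta$ on $S(G)$ is the restriction of $\Theta$ on the hypercube. In the hypercube, $\Theta$ means that the two edges flip the same coordinate, which gives Step~1 immediately.
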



The following theorem provides a general starting point for studying cop numbers of simplex graphs.

\begin{theorem}\label{thm:main-simplex}
Let \(G\) be a finite graph. Then
\[
 \left\lceil(\omega(G)+1)/2\right\rceil
 \leq c(S(G))
 \leq
 \left\lceil(\chi(G)+1)/2\right\rceil.
\]
In particular, if \(G\) is perfect, then
\[
 c(S(G))
 =
 \left\lceil(\omega(G)+1)/2\right\rceil
 =
 \left\lceil(\chi(G)+1)/2\right\rceil.
\]
\end{theorem}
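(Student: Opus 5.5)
The plan is to obtain both inequalities from results already in hand: the lower bound from a retract argument, the upper bound from the median-graph bound. The perfect case is then immediate.

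\emph{Lower bound.} First dispose of the degenerate case where \(G\) has no vertices. Then \(\omega(G)=0\) and \(S(G)=K_1\), so \(c(S(G))=1=\lceil 1/2\rceil\), and the lower bound holds.

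Otherwise, pick a maximum clique \(K\) of \(G\), so that \(|K|=\omega(G)\ge 1\). By Lemma~\ref{lem:clique-hypercube-retract}, the subsets of \(K\) induce a copy of \(Q_{\omega(G)}\) that is a retract of \(S(G)\). Lemma~\ref{lem:retract-cop} then gives \(c(Q_{\omega(G)})\le c(S(G))\). By Lemma~\ref{thm:MM},
\[
c(Q_{\omega(G)})=\lceil(\omega(G)+1)/2\rceil,
\]
which is the desired lower bound.

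\emph{Upper bound.} By Lemma~\ref{lem:simplex-median}, \(S(G)\) is a finite median graph, so Theorem~\ref{thm:cop-bound-from-tdim} applies and gives
\[
c(S(G))\le\lceil(\chi(S(G)^\#)+1)/2\rceil .
\]
By Lemma~\ref{lem:simplex-crossing}, \(S(G)^\#\cong G\), so \(\chi(S(G)^\#)=\chi(G)\), which is the claimed upper bound.

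The only point needing care is the degenerate case. When \(V(G)=\emptyset\), \(S(G)=K_1\), and the first case of Theorem~\ref{thm:cop-bound-from-tdim} already gives the value \(1=\lceil(0+1)/2\rceil\), consistent with the convention \(\chi=0\) for the empty graph. We should also check that Lemma~\ref{lem:simplex-crossing} is compatible with this convention: the crossing graph of \(K_1\) is the empty graph, matching \(G\) when \(G\) is empty.

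\emph{Perfect case.} If \(G\) is perfect, then \(\chi(G)=\omega(G)\) (taking \(H=G\) in the definition of perfection). The two bounds therefore coincide and give the stated equality.

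I expect no real obstacle, since all the substantive work lies in the earlier lemmas. The main thing to verify is that each cited lemma applies under its hypotheses, for instance the nonemptiness of \(K\) in Lemma~\ref{lem:clique-hypercube-retract}, which is handled by the case split.
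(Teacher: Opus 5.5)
Your proposal is correct and follows the paper's proof essentially step for step. The lower bound comes from the hypercube retract on a maximum clique (Lemmas~\ref{lem:clique-hypercube-retract}, \ref{lem:retract-cop} and \ref{thm:MM}), the upper bound from Theorem~\ref{thm:cop-bound-from-tdim} combined with $(S(G))^\#\cong G$, and the perfect case from $\chi(G)=\omega(G)$. The only difference is that the paper dispatches $V(G)=\varnothing$ as a separate case, whereas you run the general argument and check it against the conventions $\chi=\omega=0$ and the empty crossing graph of $K_1$.
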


\begin{proof}
If \(V(G)=\varnothing\), then \(S(G)=K_1\) and
\[
c(S(G))=1
=
\left\lceil(\omega(G)+1)/2\right\rceil
=
\left\lceil(\chi(G)+1)/2\right\rceil.
\]
Hence, assume that \(V(G)\neq\varnothing\).

Let \(K\) be a maximum clique of \(G\). By Lemma~\ref{lem:clique-hypercube-retract}, \(S(G)\) contains a retract isomorphic to \(Q_{\omega(G)}\). Hence,
\[
 c(S(G))
 \geq c(Q_{\omega(G)})
 =
 \left\lceil(\omega(G)+1)/2\right\rceil.
\]

On the other hand, Lemma~\ref{lem:simplex-median} shows that \(S(G)\) is a finite median graph, and Lemma~\ref{lem:simplex-crossing} gives $(S(G))^\#\cong G.$ Applying Theorems \ref{thm:tree-dimension-crossing} and \ref{thm:cop-bound-from-tdim}, we obtain
\[
\begin{aligned}
 c(S(G))
 &\leq
 \lceil
 (\chi((S(G))^\#)+1)/2
 \rceil\\
 &=
 \left\lceil
 (\chi(G)+1)/2
 \right\rceil.
\end{aligned}
\]

If \(G\) is perfect, then \(\chi(G)=\omega(G)\). Therefore,
\[
 c(S(G))
 =
 \left\lceil(\omega(G)+1)/2\right\rceil
 =
 \left\lceil(\chi(G)+1)/2\right\rceil.
\]
\end{proof}


Every subset of \(V(K_n)\) is a clique of \(K_n\). Hence, $S(K_n)\cong Q_n.$ Moreover, \(K_n\) is perfect and
\[
\omega(K_n)=\chi(K_n)=n.
\]
The following result therefore follows immediately from Theorem~\ref{thm:main-simplex}. For every integer \(n\geq 1\),
\[
c(Q_n)=\left\lceil(n+1)/2\right\rceil.
\]
This is the simplex-graph specialization of the classical formula of Maamoun and Meyniel \cite{MaamounMeyniel1987} for the cop number of a hypercube.

\section{The cop number of bipartite wheels and Fibonacci cubes}

In this section, we determine the exact cop number of bipartite wheels and Fibonacci cubes.

For every integer \(n\geq3\), $S(C_n)\cong BW_n$. Applying Theorem~\ref{thm:main-simplex}, we obtain the following exact
result for bipartite wheels.

\begin{theorem}\label{cor:bipartite-wheel}
For every integer \(n\geq3\),
\[
c(\mathrm{BW}_n)=2.
\]
\end{theorem}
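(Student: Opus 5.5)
We apply Theorem~\ref{thm:main-simplex} to the base graph \(G=C_n\), using the identification \(S(C_n)\cong BW_n\) for \(n\ge 3\). This identification comes from the bijection \(\varnothing\mapsto h\), \(\{u_i\}\mapsto u_i\), \(\{u_i,u_{i+1}\}\mapsto v_i\). It is worth a one-line check. The cliques of \(C_n\) are the empty set, the \(n\) singletons and the \(n\) edges; no triangles exist except when \(n=3\). Two cliques differ in exactly one vertex precisely when the pair is \(\varnothing,\{u_i\}\) or \(\{u_i\},\{u_i,u_{i+1}\}\), and these pairs are exactly the edges \(hu_i\), \(u_iv_i\), \(v_iu_{i+1}\) of \(BW_n\).

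The case \(n=3\) is the one subtle point. Here \(C_3=K_3\), so \(\{u_1,u_2,u_3\}\) is a clique, and the literal simplex graph is \(S(K_3)\cong Q_3\) rather than \(BW_3\). Since \(c(Q_3)=\lceil 4/2\rceil=2\), the claimed value is unaffected, but \(BW_3\) itself should be handled directly. I would treat \(BW_3\) as a separate case: it is the \(7\)-vertex graph obtained from \(Q_3\) by deleting one vertex.

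\textbf{Main case: \(n\ge 4\).} Here \(\omega(C_n)=2\), and \(\chi(C_n)\in\{2,3\}\) according to the parity of \(n\). Theorem~\ref{thm:main-simplex} then gives
\[
2=\left\lceil 3/2\right\rceil\le c(BW_n)\le \left\lceil (\chi(C_n)+1)/2\right\rceil\le \left\lceil 4/2\right\rceil =2,
\]
so \(c(BW_n)=2\). Note that \(C_n\) need not be perfect when \(n\) is odd, yet both bounds still equal \(2\).

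\textbf{Case \(n=3\).} For the lower bound, \(BW_3\) contains the \(6\)-cycle \(\langle u_1,v_1,u_2,v_2,u_3,v_3,u_1\rangle\) and has no dominating vertex. The cleanest argument is that \(BW_3\) is not dismantlable, because no vertex's closed neighbourhood is contained in another's; hence \(c(BW_3)\ge 2\). Alternatively, the \(4\)-cycle \(\langle h,u_1,v_1,u_2,h\rangle\) is a convex median subgraph, hence a retract by Corollary~\ref{cor:median-retract}. For the upper bound, placing cops at \(h\) and at \(v_1\) dominates all vertices except \(v_2\) and \(v_3\). A robber on \(v_2\) or \(v_3\) is then cornered in one or two moves, since the neighbours of \(v_2\) and \(v_3\) are the \(u_i\), all adjacent to \(h\).

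The only real obstacle is this \(n=3\) degeneracy, where the simplex-graph identification fails. Everything else is a direct substitution of \(\omega\) and \(\chi\) into Theorem~\ref{thm:main-simplex}.
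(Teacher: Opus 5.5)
Your proof is correct. For $n\ge 4$ it is exactly the paper's argument: substitute $\omega(C_n)=2$ and $\chi(C_n)\le 3$ into Theorem~\ref{thm:main-simplex}.

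You depart from the paper at $n=3$, and there your route is the right one. The paper applies Theorem~\ref{thm:main-simplex} uniformly for all $n\ge 3$, using $\omega(C_3)=3$ and the identification $S(C_n)\cong BW_n$. But, as you observe, $S(C_3)=S(K_3)\cong Q_3$ has $8$ vertices while $BW_3$ has $7$. So the paper's computation for $n=3$ really shows $c(Q_3)=2$ and says nothing about $BW_3$ itself.

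Your separate treatment closes this gap. For the lower bound, $BW_3$ is bipartite with minimum degree $2$. In a bipartite graph, $N[x]\subseteq N[y]$ with $y\neq x$ forces $x$ to be a leaf, so $BW_3$ has no corner, is not dismantlable, and $c(BW_3)\ge 2$. For the upper bound, cops at $h$ and $v_1$ force the robber to start on $v_2$ or $v_3$. The second cop then steps to $u_2$ or $u_1$ respectively, and every vertex of the robber's closed neighbourhood is guarded.

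One caution about your ``alternative'' lower bound. Corollary~\ref{cor:median-retract} requires the ambient graph to be median, and $BW_3$ is not: $I(v_1,v_2)\cap I(v_1,v_3)\cap I(v_2,v_3)=\{v_1,u_2,v_2\}\cap\{v_1,u_1,v_3\}\cap\{v_2,u_3,v_3\}=\varnothing$. The $4$-cycle $\langle h,u_1,v_1,u_2,h\rangle$ is nevertheless a retract. For example, fixing the cycle and sending $u_3\mapsto u_1$ and $v_2,v_3\mapsto v_1$ is an edge-preserving retraction. So the conclusion survives, but not via that corollary. Your dismantlability argument already suffices, so this is only a side remark.
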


\begin{proof}
Since
\[
\omega(C_n)=
\begin{cases}
2, & \text{if \(n\ge4\)},~~~~\\
3, & \text{if \(n=3\)};
\end{cases}
\]
and
\[
\chi(C_n)=
\begin{cases}
2, & \text{if \(n\) is even},\\
3, & \text{if \(n\) is odd}.
\end{cases}
\]
Theorem~\ref{thm:main-simplex} gives
\[
2
=
\left\lceil(\omega(C_n)+1)/2\right\rceil
\leq c(S(C_n))
\leq
\left\lceil(\chi(C_n)+1)/2\right\rceil
=2.
\]
Therefore,
\[
c(\mathrm{BW}_n)=c(S(C_n))=2.
\]
\end{proof}

Let \(P_n\) be the path on vertex set \(\{1,\ldots,n\}\). A clique of \(\overline{P_n}\) is precisely an independent set of \(P_n\). Hence, the characteristic vectors of the cliques of \(\overline{P_n}\) are exactly the binary strings of length \(n\) containing no two consecutive \(1\)'s. Therefore,
\[
\Gamma_n\cong S(\overline{P_n}).
\]
For \(n=0\), we take \(P_0\) to be the empty graph, so that $S(\overline{P_0})=K_1=\Gamma_0.$

The following standard parameters of the complement of a path have been known:
\[
        \chi(\overline{P_n})
        =
        \omega(\overline{P_n})
        =
        \alpha(P_n)
        =
        \left\lceil n/2\right\rceil.
\]

\begin{theorem}\label{thm:fibonacci-main}
For every \(n\geq0\),
\[
c(\Gamma_n)
=
\left\lceil
(\lceil n/2\rceil+1)/2
\right\rceil.
\]
Equivalently, for every integer \(q\geq0\),
\[
\begin{aligned}
c(\Gamma_{4q})   =q+1,
c(\Gamma_{4q+1}) =q+1,
c(\Gamma_{4q+2}) =q+1,
c(\Gamma_{4q+3}) =q+2.
\end{aligned}
\]
\end{theorem}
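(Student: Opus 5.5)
The plan is to obtain the formula directly from Theorem~\ref{thm:main-simplex} applied to the base graph $\overline{P_n}$, via the identification $\Gamma_n\cong S(\overline{P_n})$ established above. The key point is that the lower and upper bounds of Theorem~\ref{thm:main-simplex} coincide here, because $\omega(\overline{P_n})=\chi(\overline{P_n})$. Alternatively, one can note that $\overline{P_n}$ is perfect, as the complement of a bipartite (hence perfect) graph, by the weak perfect graph theorem. I will rely on the stated identity $\chi(\overline{P_n})=\omega(\overline{P_n})=\alpha(P_n)=\lceil n/2\rceil$ rather than on perfection.

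For completeness I would justify that identity briefly. A clique of $\overline{P_n}$ is an independent set of $P_n$, and $\{1,3,5,\dots\}$ shows $\alpha(P_n)=\lceil n/2\rceil$. For the coloring, partition $\{1,\dots,n\}$ into the pairs $\{2j-1,2j\}$, plus the singleton $\{n\}$ if $n$ is odd. Each part is a clique of $P_n$, hence an independent set of $\overline{P_n}$, so it can serve as a color class. This gives $\chi(\overline{P_n})\le\lceil n/2\rceil\le\omega(\overline{P_n})\le\chi(\overline{P_n})$.

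The case $n=0$ is handled by the convention $S(\overline{P_0})=K_1$, which gives $c(\Gamma_0)=1=\lceil(0+1)/2\rceil$. This agrees with the degenerate case in the proof of Theorem~\ref{thm:main-simplex}.

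For $n\ge1$, plugging the common value $\lceil n/2\rceil$ into both sides of Theorem~\ref{thm:main-simplex} gives $c(\Gamma_n)=\lceil(\lceil n/2\rceil+1)/2\rceil$. The residue form then follows by computing $\lceil n/2\rceil$ for $n=4q,4q+1,4q+2,4q+3$:

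\[
\lceil n/2\rceil = 2q,\; 2q+1,\; 2q+1,\; 2q+2 \quad\text{respectively}.
\]

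Substituting each value gives

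\[
\lceil (2q+1)/2\rceil = q+1,\qquad
\lceil (2q+2)/2\rceil = q+1,\qquad
\lceil (2q+3)/2\rceil = q+2 .
\]

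There is no real obstacle. The only points needing care are the small cases $n=0$ and $n=1$ (where $\overline{P_1}=K_1$ and $\Gamma_1=K_2$, with $c=1$, as the formula predicts), and checking that $\overline{P_n}$ is nonempty so that Theorem~\ref{thm:main-simplex} applies in its generic form.
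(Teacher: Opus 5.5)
Your proposal is correct and follows the paper's proof: identify $\Gamma_n\cong S(\overline{P_n})$, apply Theorem~\ref{thm:main-simplex}, use $\omega(\overline{P_n})=\chi(\overline{P_n})=\lceil n/2\rceil$ so the two bounds coincide, and then evaluate the four residue classes. Your short justification of that identity (the pairs $\{2j-1,2j\}$ as color classes, sandwiched against the independent set $\{1,3,5,\dots\}$) supplies a detail the paper only states as standard.
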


\begin{proof}
By the simplex-graph representation
\[
\Gamma_n\cong S(\overline{P_n}),
\]
Theorem~\ref{thm:main-simplex} gives
\[
\left\lceil
(\omega(\overline{P_n})+1)/2
\right\rceil
\leq c(\Gamma_n)
\leq
\left\lceil
(\chi(\overline{P_n})+1)/2
\right\rceil.
\]
Since
\[
\omega(\overline{P_n})
=
\chi(\overline{P_n})
=
\left\lceil n/2\right\rceil,
\]
the two bounds coincide, and hence
\[
c(\Gamma_n)
=
\left\lceil
(\lceil n/2\rceil+1)/2
\right\rceil.
\]
The four residue-class formulas follow by direct evaluation.
\end{proof}

Table \ref{tab:fibonacci_cop_transposed} lists the cop numbers of Fibonacci cubes $\Gamma_{n}$ for $n\in\{0,1,\ldots, 10\}$. The values agree with the exact formula given in Theorem \ref{thm:fibonacci-main} and illustrate the four-periodic pattern modulo 4.

\begin{table}[htbp]
\centering
\caption{The cop numbers of small Fibonacci cubes.}
\label{tab:fibonacci_cop_transposed}
\begin{tabular}{c|*{10}{c|}c}
\hline\hline
$n$ & 0 & 1 & 2 & 3 & 4 & 5 & 6 & 7 & 8 & 9 & 10  \\
\hline
$|V(\Gamma_n)|$ & 1 & 2 & 3 & 5 & 8 & 13 & 21 & 34 & 55 & 89 & 144  \\
\hline
$c(\Gamma_n)$ & 1 & 1 & 1 & 2 & 2 & 2 & 2 & 3 & 3 & 3 & 3  \\
\hline\hline
\end{tabular}
\end{table}

We close this section with a remark on a previously stated upper bound for Fibonacci cubes.

\begin{remark}
Crawford and Ir\v{s}i\v{c} Chenoweth~\cite{CrawfordIrsicChenoweth2025} stated that 
\(
c(\Gamma_n)\leq \left\lceil n/4\right\rceil
\) 
for all \(n\geq9\).
This statement is false, because, for \(q\geq2\),
\(
c(\Gamma_{4q+3})
=
q+2
>
q+1
=
\left\lceil(4q+3)/4\right\rceil.
\)
\end{remark}

\section{The cop numbers of Lucas cubes}

In this section, we determine the exact cop number of Lucas cubes.
\begin{theorem}\label{thm:lucas-main}
For every \(n\geq0\),
\[
 c(\Lambda_n)
 =
 \left\lceil
 (\lfloor n/2\rfloor+1)/2
 \right\rceil=\lfloor n/4\rfloor+1.
\]
Equivalently, for every integer \(q\geq0\),
\[
 c(\Lambda_{4q})
 =
 c(\Lambda_{4q+1})
 =
 c(\Lambda_{4q+2})
 =
 c(\Lambda_{4q+3})
 =
 q+1.
\]
\end{theorem}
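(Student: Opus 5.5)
For $n\le 2$ the claim is immediate: $\Lambda_0=\Lambda_1=K_1$ and $\Lambda_2\cong P_3$ are cop-win, and $\lfloor n/4\rfloor+1=1$. For $n\ge 3$ I use the representation $\Lambda_n\cong S(\overline{C_n})$. I would then apply Theorem~\ref{thm:main-simplex} with
\[
\omega(\overline{C_n})=\alpha(C_n)=\lfloor n/2\rfloor ,\qquad \chi(\overline{C_n})=\lceil n/2\rceil .
\]
The second equality holds because a colour class of $\overline{C_n}$ is an independent set of $\overline{C_n}$, i.e.\ a clique of $C_n$; for $n\ge 4$ this is a vertex or an edge, so $\chi(\overline{C_n})$ is the minimum number of cliques covering $C_n$, and $n=3$ is checked directly.

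The lower bound is always $\lceil(\lfloor n/2\rfloor+1)/2\rceil=\lfloor n/4\rfloor+1$, which one checks residue by residue. The upper bound $\lceil(\lceil n/2\rceil+1)/2\rceil$ agrees with it for $n\equiv 0,1,2\pmod 4$. So everything reduces to showing
\[
c(\Lambda_{4q+3})\le q+1 \qquad (q\ge 0),
\]
since here Theorem~\ref{thm:main-simplex} only gives $q+2$.

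The bound cannot come from Theorem~\ref{thm:cop-bound-from-tdim} alone, because $tdim(\Lambda_{4q+3})=\chi(\overline{C_{4q+3}})=2q+2$. I would instead play against a \emph{shadow} of the robber in a product with only $2q+1$ factors.

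\textbf{Block projection.} Partition the cyclic positions $1,\dots,4q+3$ into one triple block $B_0=\{1,2,3\}$ and $2q$ pair blocks $B_j=\{2j+2,2j+3\}$ for $1\le j\le 2q$. A valid string restricted to a pair block is one of $00,10,01$, which induces a path $P_3$. Restricted to $B_0$, it is a string of length $3$ with no $11$, which induces $\Gamma_3$: a $4$-cycle with a pendant vertex, a dismantlable (cop-win) median graph. Restriction to the blocks defines a map
\[
\pi\colon \Lambda_{4q+3}\to H:=\Gamma_3\,\square\,P_3^{\,\square 2q}.
\]
Each edge of $\Lambda_{4q+3}$ flips one bit, so it changes exactly one block coordinate, and $\pi$ is a graph homomorphism. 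Hence each robber move in $\Lambda_{4q+3}$ is a legal move of the shadow $\pi(\text{robber})$ in $H$.

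\textbf{Shadow capture and lifting.} I would adapt the Maamoun--Meyniel strategy behind Lemma~\ref{thm:MM} to $H$, which has $2q+1$ factors, treating $\Gamma_3$ as a "tree-like" factor because it is cop-win. The goal is that $q+1$ cops capture the shadow. Roughly, cops pair up factors and each cop shrinks its distance in two factors; the $\Gamma_3$ factor is handled with its cop-win dismantling order in place of the tree strategy. This uses $\lceil(2q+1+1)/2\rceil=q+1$ cops.

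The cops must stay on valid Lucas strings throughout, so their positions have to be chosen inside $\Lambda_{4q+3}$ and only their images need to follow the shadow strategy. This works because a cop's $\pi$-image can be moved in one block coordinate without creating $11$, provided the cop keeps the bits adjacent to block boundaries at $0$ wherever the robber's bits force it. Once some cop's image equals the robber's image, the two agree in every position, because $\pi$ is injective on strings, and the robber is caught.

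\textbf{Main obstacle.} The hardest step is making the lifting rigorous. A cop that copies the shadow strategy coordinatewise may be forced to set a boundary bit to $1$ next to a $1$ in the adjacent block, for example at positions $3,4$ or $4q+3,1$, and such a string is not a vertex of $\Lambda_{4q+3}$. I would first try to show that a cop can always postpone such a move by one turn, or replace it by the $0$-option in that block, while preserving the invariant that the Maamoun--Meyniel potential (the sum of distances in that cop's pair of factors) never increases. If that fails, I would try choosing the block partition adaptively: rotate the position of the triple block according to where the robber currently has $1$'s next to block boundaries.
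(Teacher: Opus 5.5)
\paragraph{Verdict.} Your reduction to $c(\Lambda_{4q+3})\le q+1$ matches the paper's. The strategy you propose for that case, however, fails at its core, not only in the lifting step.

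\paragraph{Where the strategy fails.} The factor $\Gamma_3$ is not cop-win. Its only dominated vertex is the pendant $010$, and deleting it leaves $C_4$, which has no dominated vertex. In fact $C_4\cong Q_2$ is a retract of $\Gamma_3$ (send $010\mapsto 100$), so $c(\Gamma_3)=2$, as the paper's Fibonacci formula also gives.

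Worse, retracting each factor separately ($\Gamma_3\to C_4$ and $P_3\to K_2$) shows that $Q_{2q+2}$ is a retract of $H=\Gamma_3\,\square\,P_3^{\,\square 2q}$. By Lemma~\ref{lem:retract-cop} and Lemma~\ref{thm:MM}, $c(H)\ge q+2$. So no Maamoun--Meyniel-type strategy lets $q+1$ cops catch a shadow moving freely in $H$. Counting $\Gamma_3$ as one ``tree-like'' factor is exactly where the extra cop disappears, since $\Gamma_3$ has tree-dimension $2$.

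Note also that $\pi$ is injective, so the shadow is just the robber in an isomorphic copy of $\Lambda_{4q+3}$ inside $H$. Any correct argument must use what distinguishes this copy from $H$. Every robber position has at most $\alpha(C_{4q+3})=2q+1$ ones, whereas $H$ has vertices with $2q+2$ ones, for example $101$ in $\Gamma_3$ and $10$ in every $P_3$ factor. Your plan never uses this bound. The remedies you list for the ``main obstacle'' (postponing moves, rotating the triple block) do not supply it.

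\paragraph{How the paper's proof proceeds.} It merges your pair blocks two at a time, giving $B_0=\{1,2,3\}$ and $B_i=\{4i,\dots,4i+3\}$ for $1\le i\le q$. Cop $D_i$ chases, inside $S(\overline{C_n})$ itself, the clique $R\setminus B_i$: it deletes the block rather than restricting to it. When $D_i$ sits on $R\setminus B_i$ and $|R\cap B_i|=1$, it adds the missing vertex and captures. Subsets of cliques are cliques, so the cops never leave $\Lambda_n$ and there is no lifting problem.

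Each distance $d_i(t)=|D_{i,t}\triangle(R_t\setminus B_i)|$ is non-increasing, hence eventually constant. In that stable phase:
\begin{enumerate}
\item The robber cannot stand still, since that would force $|R\cap B_i|=2$ for all $q+1$ blocks, i.e.\ $|R|=2q+2$.
\item It cannot move in a block with $d_j>0$.
\item In a block with $d_j=0$, its only non-losing move is to add a vertex.
\end{enumerate}
So $|R|$ grows every round, contradicting $|R|\le 2q+1$.
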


\begin{proof}
First, we treat the smaller values separately. By convention, $\Lambda_0=\Lambda_1=K_1,$  while $\Lambda_2\cong P_3
~\text{and}~\Lambda_3\cong K_{1,3}.$ Thus,
\[
c(\Lambda_n)=1
~\text{for }0\leq n\leq3.
\]

Now, assume that \(n\geq4\). Since $\Lambda_n\cong S(\overline{C_n})$, Theorem~\ref{thm:main-simplex} gives
\[
 \left\lceil
 (\omega(\overline{C_n})+1)/2
 \right\rceil
 \leq c(\Lambda_n)
 \leq
 \left\lceil
 (\chi(\overline{C_n})+1)/2
 \right\rceil.
\]
Since $\omega(\overline{C_n})
=
\left\lfloor n/2\right\rfloor
~\text{and}~
\chi(\overline{C_n})
=
\left\lceil n/2\right\rceil
~\text{for}~n\geq4,$ we obtain
\begin{equation}\label{eq:lower-and-upper-bounds-of-Lambda}
 \left\lceil(\lfloor n/2\rfloor+1)/2\right\rceil
 \leq c(\Lambda_n)
 \leq
 \left\lceil(\lceil n/2\rceil+1)/2\right\rceil.    
\end{equation}
For \(n\equiv0,1,2\pmod4\), the two bounds coincide. So we only need to prove: for $q\geq 1$,

\begin{equation}\label{eq:Lambda-4q+3}
    c(\Lambda_{4q+3})=q+1.
\end{equation}


By \eqref{eq:lower-and-upper-bounds-of-Lambda}, 
\begin{equation}\label{eq:lower-bounds}
    c(\Lambda_{4q+3})\geq \left\lceil[\lfloor (4q+3)/2\rfloor+1]/2\right\rceil=q+1
\end{equation}
Now, we prove $c(\Lambda_{4q+3})\leq q+1.$

Put $n=4q+3~\text{and}~k=q+1.$ We use the simplex-graph representation $\Lambda_n\cong S(\overline{C_n}).$ Thus, a vertex of \(\Lambda_n\) will be regarded as a clique of \(\overline{C_n}\). A move consists of adding or deleting one vertex, provided that the resulting set remains a clique of \(\overline{C_n}\).

Partition the cyclic coordinate set $V(C_n)=\{1,2,\ldots,n\}$ into \(k=q+1\) consecutive blocks $B_0=\{1,2,3\}$ and
\[
        B_i=\{4i,4i+1,4i+2,4i+3\},
        ~ 1\leq i\leq q.
\]
The vertices in each \(B_i\) occur consecutively on \(C_n\). In particular, \(C_n[B_0]\cong P_3\), while \(C_n[B_i]\cong P_4\) for \(1\leq i\leq q\). Therefore, every clique of \(\overline{C_n}\), equivalently every independent set of \(C_n\), contains at most two vertices from each block \(B_i\).

We use \(k=q+1\) cops, denoted by $D_0,D_1,\ldots,D_q,$ where cop \(D_i\) is assigned to the block \(B_i\). Initially, all cops are placed at the empty clique \(\varnothing\). For a clique \(A\) of \(\overline{C_n}\), define
\[
        \pi_i(A)=A\setminus B_i.
\]
Since every subset of a clique is again a clique, \(\pi_i(A)\) is a vertex of \(S(\overline{C_n})\).

The strategy of cop \(D_i\) is defined as follows. Suppose that the robber is currently at a clique \(R\), and it is the cops' turn to move. 
\begin{enumerate}
    \item If the current position of \(D_i\) is different from \(\pi_i(R)\), then \(D_i\) moves one step along a shortest path in \(S(\overline{C_n})\) from its current position toward \(\pi_i(R)\).
    \item If \(D_i\) is already at \(\pi_i(R)\) for some $i$, since $|R\cap B_i|\leq 2$, divide it  into the following three cases:
\begin{enumerate}[label=(\roman*)]
\item If \(R\cap B_i=\varnothing\), then $\pi_i(R)=R,$ so \(D_i\) already occupies the robber's position and the robber is captured.

\item If \(|R\cap B_i|=1\), then \(D_i\) adds the unique vertex of \(R\cap B_i\). The resulting clique is exactly \(R\), so the robber is captured.

\item If \(|R\cap B_i|=2\), then \(D_i\) remains at \(\pi_i(R)\).
\end{enumerate}
\end{enumerate}

We prove that this strategy guarantees capture. Suppose, to the contrary, that the robber can evade the cops indefinitely. We now fix the time notation. Let $R_{0}$ and $D_{i,0}$ denote the initial positions of robber and cop $D_{i}$, respectively, so $D_{i,0}=\varnothing$ for all $i$. For \(t\geq 1\), let \(R_t\) and \(D_{i,t}\) denote the positions of the robber and cop \(D_i\), respectively, immediately after the cops have completed their \(t\)-th response. The robber then moves from \(R_t\) to a clique \(\widehat R_t\), after which the cops make their next response. Since the robber does not move during the cops' response, we have $R_{t+1}=\widehat R_t.$
For every \(i\in\{0,1,\ldots,q\}\), define
\[
        d_i(t)
        =
        d_{S(\overline{C_n})}
        \bigl(D_{i,t},\pi_i(R_t)\bigr)
        =
        \bigl|D_{i,t}\triangle\pi_i(R_t)\bigr|.
\]
We first show that

{\bf Claim} For every \(i\) and every \(t\), 
\[
        d_i(t+1)\leq d_i(t),
\]
unless the robber is captured during the cops' response.

Suppose first that the robber remains at \(R_t\). Then, none of the targets \(\pi_i(R_t)\) changes. Therefore, if \(d_i(t)>0\), cop \(D_i\) moves one step closer to the same target, and therefore $d_i(t+1)=d_i(t)-1.$ If \(d_i(t)=0\), then \(D_{i,t}=\pi_i(R_t)\); the strategy either captures the robber or leaves \(D_i\) at the target. Hence, $d_i(t+1)=d_i(t)=0.$

Now suppose that the robber changes one coordinate, and let \(B_j\) be the block containing this coordinate.

If \(i=j\), deleting all coordinates of \(B_j\) eliminates the changed coordinate, and therefore $\pi_j(\widehat R_t)=\pi_j(R_t).$ Thus, the target of \(D_j\) does not change. If \(d_j(t)>0\), then \(D_j\) moves one step closer to this same target, so $d_j(t+1)=d_j(t)-1.$ If \(d_j(t)=0\), then \(D_{j,t}\) already occupies the unchanged target. Unless the robber is captured, \(D_j\) remains at this target, and hence
$d_j(t+1)=0.$

Otherwise, if \(i\neq j\), i.e,, the coordinate changed by the robber lies outside \(B_i\), the two targets \(\pi_i(R_t)\) and
\(\pi_i(\widehat R_t)\) differ in exactly this coordinate, and in particular $d_{S(\overline{C_n})}\bigl(\pi_i(R_t),\pi_i(\widehat R_t)\bigr)=1.$
By the triangle inequality,
\[
\begin{aligned}
d_{S(\overline{C_n})}
\bigl(D_{i,t},\pi_i(\widehat R_t)\bigr)
&\leq
d_{S(\overline{C_n})}
\bigl(D_{i,t},\pi_i(R_t)\bigr)+1\\
&=d_i(t)+1.
\end{aligned}
\]
If \(D_{i,t}\neq\pi_i(\widehat R_t)\), cop \(D_i\) moves one step toward the new target. Hence, $d_i(t+1)\leq d_i(t).$ If \(D_i(t)=\pi_i(\widehat R_t)\), then \(d_i(t+1)=0\), unless capture has already occurred. Thus, in all cases, $d_i(t+1)\leq d_i(t).$ The proof of the claim is completed. 

It follows that every sequence $d_i(0),d_i(1),d_i(2),\ldots$ is a non-increasing sequence of nonnegative integers. Hence, there exists a time \(T\) such that the vector $\bigl(d_0(t),d_1(t),\ldots,d_q(t)\bigr)$ is constant for all \(t\geq T\). We refer to this period as the stable phase.

We first show that the robber cannot remain stationary during the stable phase. Suppose that the robber remains at \(R_t\) for some \(t\geq T\). If \(d_i(t)>0\) for some \(i\), then \(D_i\) moves one step closer to the unchanged target, giving $d_i(t+1)=d_i(t)-1,$ contrary to stability. Thus, stability would require $d_i(t)=0$ for every \(i\). Therefore, $D_{i,t}=\pi_i(R_t)$ for every \(i\).

Since the robber is not captured, the first two cases of the strategy cannot occur. Hence, $|R_t\cap B_i|=2$ for every \(i\in\{0,1,\ldots,q\}\). Because the blocks form a partition
of \(V(C_n)\), this gives
\[
        |R_t|
        =
        \sum_{i=0}^{q}|R_t\cap B_i|
        =
        2(q+1)
        =
        2q+2.
\]
However, \(R_t\) is a clique of \(\overline{C_{4q+3}}\), and
\[
        \omega(\overline{C_{4q+3}})
        =
        \alpha(C_{4q+3})
        =
        \left\lfloor(4q+3)/2\right\rfloor
        =
        2q+1.
\]
This is a contradiction. Therefore, the robber cannot remain stationary during the stable phase.

We next show that, during the stable phase, the robber cannot change a coordinate belonging to a block \(B_j\) for which \(d_j(t)>0\). In fact, such a move leaves the target \(\pi_j(R_t)\) unchanged, and therefore cop \(D_j\) moves one step closer to that target. This gives $d_j(t+1)=d_j(t)-1,$ again contradicting stability.

Consequently, every robber move during the stable phase changes one coordinate in some block \(B_j\) satisfying $d_j(t)=0.$ For this block, $D_{j,t}=\pi_j(R_t).$ Since the robber changes a coordinate belonging to \(B_j\), the
projection outside \(B_j\) is unchanged. Hence,
\[
        \pi_j(\widehat R_t)=\pi_j(R_t)=D_{j,t}.
\]

Let $s=|R_t\cap B_j|.$ Since a clique of \(\overline{C_n}\) contains at most two vertices from \(B_j\), we have $s\in\{0,1,2\}.$  If \(s=0\), that is, $R_t=\pi_j(R_t)$, then the robber has been captured. If \(s=2\), then the robber cannot add a third vertex from \(B_j\), because a clique of \(\overline{C_n}\) contains at most two vertices from this block. Hence, the robber must delete one of the two vertices, and $|\widehat R_t\cap B_j|=1.$ Again, cop \(D_j\) adds the unique remaining vertex of \(\widehat R_t\cap B_j\) and captures the robber.

Thus, suppose that \(s=1\). If the robber deletes the unique vertex of \(R_t\cap B_j\), then
\[
        \widehat R_t=\pi_j(\widehat R_t)=D_{j,t},
\]
so the robber is captured immediately. Therefore, the only move in \(B_j\) that does not lead to immediate capture is to add a second vertex. In that case, $|\widehat R_t\cap B_j|=2
~\text{and}~|\widehat R_t|=|R_t|+1.$

We have proved that every robber move during the stable phase that avoids capture strictly increases the size of the robber's clique. Thus, $|R_{t+1}|=|R_t|+1$ for every \(t\geq T\), as long as capture does not occur. This cannot continue indefinitely, because every robber position is a clique of \(\overline{C_{4q+3}}\) with the size at most $\omega(\overline{C_{4q+3}})=2q+1.$ This final contradiction shows that the robber cannot evade capture forever.

Therefore, \(q+1\) cops have a winning strategy on \(\Lambda_{4q+3}\), and hence
\begin{equation}\label{eq:upper-bounds}
    c(\Lambda_{4q+3})\leq q+1.
\end{equation}
        
Combined with \eqref{eq:lower-bounds} and \eqref{eq:upper-bounds}, \eqref{eq:Lambda-4q+3} holds.

The proof of this theorem is completed.
\end{proof}

Table \ref{tab:lucas_cop_transposed} lists the cop numbers of Lucas cubes $\Lambda_n$ for $n\in\{0,1,\ldots,10\}$. The values agree with the exact formula in Theorem~\ref{thm:lucas-main}. In particular, the previously delicate residue class $n\equiv 3 \pmod 4$ also follows from the same formula.

\begin{table}[htbp]
\centering
\caption{The cop numbers of small Lucas cubes.}
\label{tab:lucas_cop_transposed}
\begin{tabular}{c|*{10}{c|}c}
\hline\hline
$n$ & 0 & 1 & 2 & 3 & 4 & 5 & 6 & 7 & 8 & 9 & 10 \\
\hline
$|V(\Lambda_n)|$& 1 & 1 & 3 & 4 & 7 & 11 & 18 & 29 & 47 & 76 & 123  \\
\hline
$c(\Lambda_n)$ & 1 & 1 & 1 & 1 & 2 & 2 & 2 & 2 & 3 & 3 & 3  \\
\hline\hline
\end{tabular}
\end{table}

\begin{remark}
   Crawford and Ir\v{s}i\v{c} Chenoweth~\cite{CrawfordIrsicChenoweth2025} stated that
\(
c(\Lambda_n)\leq \left\lceil n/4\right\rceil
\)  
when $n\geq 9$. This statement is false. By Theorem \ref{thm:lucas-main}, When $n=4q$ and $q\geq 3$ is an integer, $c(\Lambda_n)= \left\lfloor n/4\right\rfloor+1=q+1>q=\left\lceil n/4\right\rceil$.
\end{remark}

\section{Conclusion}

We first established an improved general upper bound for finite median graphs. Our result refines the previous bound and can be strictly stronger, while remaining sharp for hypercubes. We then established general lower and upper bounds for the cop number of a simplex graph:
\[
        \left\lceil(\omega(G)+1)/2\right\rceil
        \leq c(S(G)) \leq
        \left\lceil(\chi(G)+1)/2\right\rceil .
\]
By leveraging these bounds, we have established the exact values of cop numbers of hypercubes, bipartite wheels, and Fibonacci cubes; furthermore, through a detailed structural analysis, we have also determined the cop numbers of Lucas cubes.

The exact formulas for Fibonacci and Lucas cubes can be viewed from a broader downset perspective. Both families are induced subgraphs of hypercubes whose vertex sets are closed downward under the coordinatewise order. This suggests that the hypercube-retract lower bound used throughout the paper may remain sharp for a larger class of partial cubes.

Daisy cubes provide a natural setting for this question. Let $\le$ be a \emph{partial order} on $\{0,1\}^n$ defined in the following way : $u_1u_2\ldots u_n\le v_1v_2\ldots v_n$ if $u_i\le v_i$ is true for all $i\in \{1,2,\ldots, n\}$. For
\(X\subseteq \{0,1\}^n\), the \emph{daisy cube} \(Q_n(X)\) is the subgraph of \(Q_n\) induced by
\[
        \{u\in \{0,1\}^n : u\leq x \text{ for some } x\in X\}.
\]
For a binary string
\(u=u_1u_2\cdots u_n\), let $w(u)=|\{i : u_i=1\}|$ be its \emph{Hamming weight}, and set
\[
        \rho(Q_n(X))=
        \max\{w(u):u\in V(Q_n(X))\}.
\]

If \(\rho(Q_n(X))=0\), then \(Q_n(X)=K_1\), and  $c(Q_n(X))=\left\lceil(\rho(Q_n(X))+1)/2\right\rceil=1$. Hence, assume that \(\rho(Q_n(X))\geq 1\). Choose a vertex $x=x_1x_2\cdots x_n\in V(Q_n(X))$ of weight \(\rho(Q_n(X))\), and let
\[
K=\{i\in\{1,\ldots,n\}:x_i=1\}.
\]
Then \(|K|=\rho(Q_n(X))\). Since \(Q_n(X)\) is downward closed, the vertices whose \(1\)-coordinates form subsets of \(K\) induce a subcube isomorphic to \(Q_{|K|}\). We can construct a retraction of $Q_n(X)$ onto \(Q_{|K|}\) in the following.

Let $u=u_1u_2\cdots u_n$, $v=v_1v_2\cdots v_n$ be two binary strings of lengths $n$. $u\wedge v$ and $u\setminus v$ is defined as follows, respectively:
$$
(u\wedge v)_i=\begin{cases}
    1, & \mbox{if }u_i=1\mbox{ and }v_i=1;\\
    0, & \mbox{otherwise,}
\end{cases}
$$
$$
(u\setminus v)_i=\begin{cases}
    1, & \mbox{if }u_i=1\mbox{ and }v_i=0;\\
    0, & \mbox{otherwise.}
\end{cases}
$$
for all $1\leq i\leq n$. For some $1\leq i\leq n$, Denote \(v^{(i)}\) the binary string obtained from \(v\) by changing the \(i\)-th coordinate.

Now, fix a coordinate \(k_0\in K\). For every $v\in V(Q_n(X))$, define
\[
r(v)=
\begin{cases}
v\wedge x,
   & \text{if } w(v\setminus x) \text{ is even},\\[2mm]
(v\wedge x)^{(k_0)},
   & \text{if } w(v\setminus x) \text{ is odd}.
\end{cases}
\]

By the definition, we can see that for any $v\in V(Q_n(X))$, $r(v)\leq x$, i.e., $r(v)\in V(Q_{|K|})$. Moreover, if $r(v)\in V(Q_{|K|})$, i.e., $v\leq x$, then $w(v\setminus x)=w(0^n)=0$, so $r(v)=v\wedge x=v$. 

Let \(u\) and \(v\) be adjacent vertices of \(Q_n(X)\). Then $u$ and $v$ are different only at the $i_0$-th coordinate. If $i_0\in K$, $u\setminus x=v\setminus x$.  
By the definition, Then $r(u)$ and $r(v)$ are also different only at the $i_0$-th coordinate, that is, \(r(u)\) and \(r(v)\) are adjacent in \(Q_{|K|}\). If \(i_0 \notin K\). Then $u\wedge x=v\wedge x$, while \(|u\setminus x|\) and \(|v\setminus x|\) have opposite parities. By the definition, Then $r(u)$ and $r(v)$ are different only at the $k_0$-th coordinate, 
and again \(r(u)\) and \(r(v)\) are adjacent in \(Q_{|K|}\). By the above analysis, \(r\) is a retraction of \(Q_n(X)\) onto \(Q_{|K|}\).   
Therefore,
\[
c(Q_n(X))
\geq
c\bigl(Q_{\rho(Q_n(X))}\bigr)
=
\left\lceil
(\rho(Q_n(X))+1)/2
\right\rceil.
\]

The formulas obtained in this paper for Fibonacci and Lucas cubes show that this lower bound is sharp for two important families of daisy cubes. This leads to the following problem.

\begin{problem}\label{problem:daisy cube}
Is it true that for every daisy cube \(Q_n(X)\),
\[
        c(Q_n(X))=
        \left\lceil(\rho(Q_n(X))+1)/2\right\rceil?
\]
\end{problem}


The tree-dimension approach also suggests a broader problem for partial cubes. Theorem~\ref{thm:cop-bound-from-tdim} shows that
\[
c(G)\leq
\left\lceil
(tdim(G)+1)/2
\right\rceil
\]
when \(G\) is median. The median assumption is essential to our proof of Theorem \ref{thm:cop-bound-from-tdim}, since it ensures that an isometric copy of $G$ in a Cartesian product of trees is a retract of that product. An arbitrary partial cube need not have this retraction property. Theorem \ref{thm:tree-dimension-crossing} prove that $tdim(G)=\chi(G^\#)$ holds for every finite nontrivial partial cube \(G\), it is nevertheless natural to ask whether the resulting cop-number bound remains valid without the median assumption.

\begin{problem}\label{conj:partial-cube-tree-dimension}
Does for every finite partial cube \(G\),
\[
c(G)\leq \left\lceil (tdim(G)+1)/2 \right\rceil = \left\lceil (\chi(G^\#)+1)/2 \right\rceil ?
\]
\end{problem}

This problem strengthens the previously proposed upper bound in terms of the isometric dimension. It would also be best possible, as equality holds for hypercubes and, more generally, for the simplex graphs of perfect graphs. We are currently investigating the problem \ref{problem:daisy cube}, and we plan to report our results in subsequent papers.

\section{Declaration of competing interest}
The authors declare that they have no known competing financial interests or personal relationships that could have appeared to influence the work reported in this paper.

\section{Data availability}
No data was used for the research described in the article.

\section{Acknowledgments}
This work was partially supported by the National Natural Science Foundation of China (No. 12071194).

\end{CJK}
\end{document}